\theoremstyle{plain}
\newtheorem{theorem}{Theorem}[section]
\newtheorem{lemma}[theorem]{Lemma}
\newtheorem{definition}[theorem]{Definition}
\newtheorem{corollary}[theorem]{Corollary}
\newtheorem{proposition}[theorem]{Proposition}
\newtheorem{remark}[theorem]{Remark}
\newtheorem{example}[theorem]{Example}
\DeclareMathOperator{\var}{var}
\newcommand{\R}{\mathbb R}
\newcommand{\N}{\mathbb N}
\newcommand{\I}{\mathcal I}
\newcommand{\J}{\mathcal J}
\newcommand{\cP}{\mathcal P}
\newcommand{\Fin}{\mathrm{Fin}}
\newcommand{\Exh}{\mathrm{Exh}}
\newcommand{\FIN}{\mathrm{FIN}}
\newcommand{\EXH}{\mathrm{EXH}}
\newcommand{\mFIN}{\mathrm{mFIN}}
\newcommand{\mEXH}{\mathrm{mEXH}}
\newcommand{\BV}{\mathrm{BV}}
\newcommand{\Mon}{\mathrm{Mon}}
\newcommand{\norm}[1]{\left\|#1\right\|}
\newcommand{\abs}[1]{\lvert#1\rvert}
\newcommand{\zdef}{{\mathrel{\mathop:}=}}
\newcommand{\lnorm}[1]{\left\|#1\right\|_{\Lambda BV}}
\newcommand{\ABV}{{ABV}}
\newcommand{\BBV}{{BBV}}
\title{Functions of bounded variation from ideal perspective}
\author[J.~Gulgowski]{Jacek Gulgowski}
\address[J.~Gulgowski]{Institute of Mathematics\\ Faculty of Mathematics, Physics and Informatics\\ University of Gda\'{n}sk\\ ul. Wita Stwosza 57\\ 80-308 Gda\'{n}sk\\ Poland}
\email{Jacek.Gulgowski@ug.edu.pl}
\author[A.~Kwela]{Adam Kwela}
\address[A.~Kwela]{Institute of Mathematics\\ Faculty of Mathematics\\ Physics and Informatics\\ University of Gda\'{n}sk\\ ul.~Wita  Stwosza 57\\ 80-308 Gda\'{n}sk\\ Poland}
\email{Adam.Kwela@ug.edu.pl}
\urladdr{https://mat.ug.edu.pl/~akwela}
\author[J.~Tryba]{Jacek Tryba}
\address[J.~Tryba]{Institute of Mathematics\\ Faculty of Mathematics, Physics and Informatics\\ University of Gda\'{n}sk\\ ul. Wita Stwosza 57\\ 80-308 Gda\'{n}sk\\ Poland}
\email{Jacek.Tryba@ug.edu.pl}
\begin{document}

\begin{abstract}
We present a unified approach to two classes of Banach spaces defined with the aid of variations: Waterman spaces and Chanturia classes. Our method is based on some ideas coming from the theory of ideals on the set of natural numbers.
\end{abstract}

\maketitle

\section{Introduction}

The concept of the variation of the function was introduced in 1881 by Camille Jordan and found plenty of applications and generalizations since that time. When we look closely at the definition of the variation of the function we can see its tight relation to the question of (un)boundedness of the series. This becomes even more evident, when we look at some of the generalizations of Jordan's definition, namely the $\Lambda$-variation introduced by Waterman in 1972 (see \cite{Wat1}) and Chanturia classes introduced in 1974 (see \cite{Cha74}).

The question of (un)boundedness of the series of real numbers naturally appears also in the studies of the ideals on the set of natural numbers, with the paper \cite{FKT} as the very recent example of this perspective and the concept of the summable ideal (introduced below), which is a very basic notion in the theory of ideals on the set of natural numbers.

Looking at these two separate threads in the realm of mathematics it appeared to be very appealing to us to join them: to look at different spaces of functions of bounded variation from the perspective of the theory of ideals defined on the set of natural numbers. The additional inspiration arrived from the recent paper \cite{FBN} by Borodulin-Nadzieja and Farkas, who showed that the concept of ideals introduced by l.s.c. submeasure on $\N$ (see definitions below) naturally defines certain Banach sequence spaces. On the other hand, these sequence spaces may be used as a natural foundation for the definition of spaces of functions of some type of bounded variation, with a concept of variation generalizing many different attitudes (especially Waterman variation and Chanturia classes).

In this paper we define the concept of the variation of the function defined on a compact real interval originating from the l.s.c submeasure $\phi$ defined on $\N$ (this is laid out in Section 3). In the Section 4 we study the inclusions between different spaces and corresponding relations between ideals generated by the submeasure, in a general setting. Then, in Section 5 we show that for simple density ideals we recreate the Chanturia classes; while in Section 6 we show that summable ideals correspond directly to the concept of Waterman $\Lambda$-variation. It appears that in these two special cases the inclusions between the spaces of functions of bounded variation may be nicely described by the relation between corresponding ideals (in terms of inclusion or Kat\v{e}tov order). One of these results also leads us to a new characterization of Kat\v{e}tov order between summable ideals.

\section{Preliminaries}

\subsection{Basics about sequence spaces} 

By $\R^\N$ we will denote the family of all real-valued sequences.

We will refer to several standard Banach sequence spaces. Here we will present a notation, and basic properties, which will be used in the sequel. First of all, in the examples listed below we set a sequence $x=(x_n)_{n\in\N}\in\R^\N$ of real numbers.
\begin{itemize}
\item $\ell_\infty$ denotes the space of all bounded sequences equipped with the supremum norm $\norm{x}_\infty = \sup_{n\in \N}|x_n|$;
\item $c_0$ denotes the subspace of $l_\infty$ consisting of all sequences such that $\lim_{n\to+\infty} x_n = 0$.
\item $\ell_1$ denotes the space of such sequences that $\norm{x}_{\ell_1}=\sum_{n\in\N}|x_n|<+\infty$.
\end{itemize}

\subsection{Basics about \texorpdfstring{$\Lambda BV$}{} spaces}

Let us assume that $A=(a_n)_{n\in\N}$ is such nonincreasing sequence of positive real numbers that $\sum_{n=1}^{\infty} a_n = +\infty$. We call such sequence \emph{a Waterman sequence}. If additionally $\lim_{n\to +\infty}a_n = 0$, we say that the sequence $A$ is a \emph{proper Waterman sequence}.

\begin{remark}
In many sources the Waterman sequence is defined in a form $(\frac{1}{\lambda_n})_{n\in\N}$ where $(\lambda_n)_{n\in\N}$ is nondecreasing and such that    $\sum_{n=1}^{\infty} \frac{1}{\lambda_n} = +\infty$. Then if $\lim_{n\to+\infty} \lambda_n = +\infty$ we have a proper Waterman sequence. Of course by putting $a_n = \frac{1}{\lambda_n}$ we can see that the two definitions are essentially identical.
\end{remark}

Let us denote the unit interval by $I=[0,1]$. Moreover, by ${\mathcal P}_I$ we denote the set of all sequences  of nonoverlapping, closed  subintervals $\{I_1, I_2, \ldots, I_N, \ldots\}$ of $I$. The intervals may be {\em degenerate}, i.e. it may happen that $I_n$ consists of only one point. 

 \begin{definition}
  Let $A = (a_n)_{n\in\mathbb{N}}$ be a Waterman sequence and let $x\colon I\to\mathbb{R}$. We say that $x$ is of bounded $A$-variation  if there exists a positive  constant $M$ such that for any sequence of nonoverlapping  subintervals $\{I_1, I_2, I_3, \ldots, \} \in {\mathcal P}_I$, the following inequality holds
  \[
   \sum_{n=1}^{+\infty} a_n\abs{x(I_n)} \leq M,
  \]
  where $I_n = [s_n,t_n]$ and $|x(I_n)| = |x(t_n)-x(s_n)|$.
  The supremum of the above sums, taken over the family ${\mathcal P}_I$ of all sequences of nonoverlapping subintervals of $I$, is called the $A$-variation of $x$  and it is denoted by $\var_A(x)$. 
 \end{definition}

 \begin{remark}
 The special case of a sequence constantly equal to $1$ corresponds to the classical Jordan variation of a function $x$, which will later be denoted by $\var(x)$.
 \end{remark}
 
This concept was introduced by Waterman in \cite{Wat1}. Since then the functions of bounded $A$-variation were intensively studied by many authors -- for an overview we refer to \cite{ABM}.
 
It is worth to mention that there are many equivalent ways to express that the function $x\colon I\to\R$ is of bounded $A$-variation (cf. \cite[Theorem 1, p.~34]{Wat1976} and \cite[Proposition 1]{BCGS}), but we will not go into the details here.

The space of all  functions defined on the interval $I$ and  of bounded $A$-variation, endowed with the norm $\lnorm{x} \zdef \abs{x(0)} + \var_A(x)$ forms a Banach space $\ABV(I)$ (see \cite[Section 3]{Wat1976}).

The spaces $\ABV(I)$ are proper subspaces of the space $B(I)$ of all bounded functions $x\colon I\to\R$. The space $B(I)$ is equipped with the standard supremum norm
\[
\norm{x}_\infty = \sup_{t\in I} |x(t)|.
\]

\subsection{Basics about ideals}

 \begin{definition}
 A family $\I\subseteq\cP(\N)$ is called an \emph{ideal} if
 \begin{itemize}
    \item $\N\notin\I$,
    \item if $F\subseteq\N$ is finite, then $F\in\I$,
    \item if $C\in\I$ and $D\subseteq C$, then $D\in\I$,
    \item if $C,D\in\I$, then $C\cup D\in\I$.
 \end{itemize}
 \end{definition}

  \begin{definition}
  \label{def-summable}
An ideal is called a \emph{summable ideal} if it is of the form 
$$\I_A=\left\{C\subseteq\N: \sum_{n\in C}a_n<\infty\right\},$$
for some sequence of positive real numbers $A=(a_n)_{n\in\N}$ such that $\sum_{n=1}^\infty a_n=\infty$.
 \end{definition}

 \begin{remark}
    Note that in the definition of summable ideals we do not require that $A$ is nonincreasing. However, in our paper we will only consider summable ideals given by nonincreasing sequences.
 \end{remark}

 By $\Fin$ we denote the smallest ideal, i.e., the one consisting only of all finite subsets of $\N$. Note that $\Fin$ is a summable ideal (given by the sequence $(a_n)$ constantly equal to $1$).

  \begin{definition}
If $\I$ and $\J$ are ideals then we say that \emph{$\I$ is below $\J$ in the Kat\v{e}tov order} and write $\I\leq_K\J$ whenever there is a function $f:\N\to\N$ such that $f^{-1}[C]\in\J$ for every $C\in\I$.
 \end{definition}

Note that actually, despite its name, Kat\v{e}tov order is only a pre-order, not a partial order (it is not antisymetric). Kat\v{e}tov order was introduced in the 1970s in papers \cite{Kat2} and \cite{Kat1} by M. Kat\v{e}tov.

We say that an ideal $\I$ is \emph{tall} if for every infinite $C\subseteq\N$ there is an infinite $D\subseteq C$ such that $D\in\I$. It is easy to see that $\I$ is not tall if and only if  $\I\leq_K\Fin$. Consequently, all non-tall ideals are $\leq_K$-equivalent (i.e., $\I\leq_K\J$ and $\J\leq_K\I$ for any two non-tall ideals $\I$ and $\J$). If $A=(a_n)_{n\in\N}$
 is a sequence of positive real numbers such that $\sum_{n=1}^\infty a_n=\infty$, then $\I_A$ is tall if and only if $\lim_{n\to\infty}a_n=0$.

An ideal $\I$ is a \emph{P-ideal} if for every sequence $(A_n)_{n\in\N}$ of elements of $\I$ there is $A\in\I$ such that $A_n\setminus A$ is finite for all $n\in\N$. It is easy to verify that all summable ideals are P-ideals.

\section{Submeasures and objects induced by them}

\subsection{Submeasures}

A function $\phi:\mathcal{P}(\N)\to[0,\infty]$ is called a \emph{submeasure} if $\phi(\emptyset)=0$, $\phi(\{n\})<\infty$ for every $n\in\N$, and 
\[
\phi(C)\leq\phi(C\cup D)\leq\phi(C)+\phi(D)
\]
for all $C,D\subseteq\N$. A submeasure $\phi$ is \emph{lower semicontinuous} (lsc, in short) if $\phi(C)=\lim_{n\to\infty}\phi(C\cap\{1,2,\ldots,n\})$ for each $C\subseteq\N$.

An lsc submeasure $\phi$ is \emph{non-pathological}, if
$$\phi(C)=\sup\{\mu(C):\ \mu\text{ is a measure such that }\mu\leq\phi\},$$
for all $C\subseteq\N$. Not every lsc submeasure is non-pathological -- see \cite[Section~1.9]{farah-book}, \cite[Theorem~4.12]{ft-mazur}, \cite[Section~6.2]{marciszewski-sobota}, \cite{MezaPat} or \cite[Theorem~4.7]{tryba-gendensity} for such examples. 

Given an lsc submeasure $\phi$, let $\mathcal{M}_\phi$ be the family of all measures $\mu$ on $\N$ such that $\mu\leq\phi$. Then, by definition, $\phi$ is non-pathological if and only if $\phi(C)=\sup\{\mu(C): \mu\in\mathcal{M}_\phi\}$ for all $C\subseteq\N$.

\subsection{Ideals induced by submeasures}

By identifying subsets of $\N$ with their characteristic functions, we can treat ideals as subsets of the Cantor space $\{0,1\}^\N$. Mazur in \cite[Lemma 1.2]{Mazur} proved that an ideal is $\bf{F_\sigma}$ if and only if it is of the form:
$$\Fin(\phi)=\left\{C\subseteq\N:\ \phi(C)<\infty\right\}$$
for some lower semicontinuous submeasure $\phi$ such that $\N\notin\Fin(\phi)$ (see also \cite[Theorem 1.2.5]{Farah}).

Solecki in \cite[Theorem 3.1]{SoleckiExh} showed that an ideal is an analytic P-ideal if and only if it is of the form:
$$\Exh(\phi)=\left\{C\subseteq\N:\ \lim_{n\to\infty}\phi(C\setminus \{1,2,\ldots,n\})=0\right\}$$
for some lower semicontinuous submeasure $\phi$ such that $\N\notin\Exh(\phi)$ (see also \cite[Theorem 1.2.5]{Farah}). 

It is easy to see that $\Exh(\phi)\subseteq\Fin(\phi)$ for every lsc submeasure $\phi$. Moreover, for every lsc submeasure $\phi$ we can find an lsc submeasure $\phi'$ such that $\Fin(\phi)=\Fin(\phi')$, $\Exh(\phi)=\Exh(\phi')$ and additionally $\phi'(\{k\})>0$ for all $k\in\N$ (it suffices, for instance, to put $\phi'(C)=\phi(C)+\sum_{n\in C}\frac{1}{2^n}$ for all $C\subseteq\N$).

Note that every summable ideal is of the form $\Exh(\phi_A)$ as well as of the form $\Fin(\phi_A)$, where $\phi_A(C)=\sum_{n\in C}a_n$. For more examples of ideals induced by submeasures see \cite[Example 1.2.3]{Farah}. 

\subsection{Banach spaces of real sequences}

\begin{definition}
Let $\phi$ be a non-pathological lsc submeasure. Define a function $\hat\phi:\R^\N\to[0,\infty]$ by:
\[
\hat\phi(x)=\sup\left\{\sum_{n\in\N}\mu(\{n\})|x_n|: \mu\in\mathcal{M}_\phi\right\}
\]
for all $x=(x_n)_{n\in\N}\in\R^\N$. Define also:
\[
\FIN(\phi)=\left\{x\in\R^\N: \hat\phi(x)<\infty\right\};
\]
\[
\EXH(\phi)=\left\{x\in\R^\N: \lim_{n\to\infty}\hat\phi(x\cdot\chi_{\{n,n+1,\ldots\}})=0\right\}.
\]
Moreover, let $\mFIN(\phi)=\FIN(\phi)\cap\Mon$ and $\mEXH(\phi)=\EXH(\phi)\cap\Mon$, where
\[
\Mon=\{(x_n)_{n\in\N}\in\R^\N: |x_{n+1}|\leq|x_n|\text{ for all }n\in\N\}.
\]
\end{definition}

Spaces $\FIN(\phi)$ and $\EXH(\phi)$ were introduced in \cite[Section 5]{FBN}. Note that $\hat\phi(x)=\lim_{n\to\infty}\hat\phi(x\cdot\chi_{\{1,2,\ldots,n\}})$ for every $x\in\R^\N$ (see \cite[Proposition 5.3]{FBN}).

\begin{example}[{\cite[Examples 5.6 and 5.7]{FBN}}]\
\begin{itemize}
    \item Consider the submeasure given by:
\[
\phi(C)=\begin{cases}
    1, & \text{if }C\neq\emptyset,\\
    0, & \text{if }C=\emptyset,
\end{cases}
\]
for every $C\subseteq\N$. Then $\FIN(\phi)=\ell_\infty$ and $\EXH(\phi)=c_0$.
\item Consider the submeasure given by $\phi(C)=|C|$ for every $C\subseteq\N$. Then $\FIN(\phi)=\EXH(\phi)=\ell_1$.
\end{itemize}
\end{example} 

\begin{remark}
Notice that for every $x=(x_n)_{n\in\N}\in\R^\N$ and $k\in\N$ we have $\hat\phi(x\cdot \chi_{\{k\}})=\phi(\{k\})\cdot |x_k|$. Therefore, for any $A\subseteq\N$ we have
$$\sup_{k\in A}\left( \phi(\{k\})\cdot |x_k|\right)  \leq \hat\phi(x\cdot \chi_A)\leq  \sup_{k\in A} |x_k| \cdot \sum_{k\in A} \phi(\{k\})  $$
\end{remark}

\begin{remark}
\label{rem1}
Observe that $C\in\Fin(\phi)$ if and only if $\chi_C\in\FIN(\phi)$, where $\chi_C$ denotes the characteristic function of $C$. Similar equivalence holds for $\Exh(\phi)$ and $\EXH(\phi)$.
\end{remark}

\begin{proposition}{\cite[Propositions 5.1 and 5.3]{FBN}}
\label{FBN}
Suppose that $\phi$ is a non-pathological lsc submeasure. Then $\FIN(\phi)$ and $\EXH(\phi)$ are Banach spaces normed by $\hat\phi$. Moreover, $\EXH(\phi)\subseteq\FIN(\phi)$.
\end{proposition}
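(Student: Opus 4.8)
The plan is to recognize $\hat\phi$ as a supremum of a family of seminorms and then run a coordinatewise-limit argument for completeness. First I would observe that for each fixed $\mu\in\mathcal{M}_\phi$ the map $x\mapsto\sum_{n\in\N}\mu(\{n\})|x_n|$ is positively homogeneous and subadditive (the latter by the triangle inequality applied termwise), hence a seminorm with values in $[0,\infty]$. Passing to the supremum over $\mu\in\mathcal{M}_\phi$ preserves both properties, so $\hat\phi$ is itself a seminorm on $\R^\N$. Consequently $\FIN(\phi)=\{x:\hat\phi(x)<\infty\}$ is a linear subspace of $\R^\N$ (closed under addition by subadditivity and under scaling by homogeneity) on which $\hat\phi$ restricts to a seminorm. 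To upgrade seminorm to norm I would combine the preceding remark $\hat\phi(x\cdot\chi_{\{n\}})=\phi(\{n\})|x_n|$ with the evident monotonicity $\hat\phi(x\cdot\chi_A)\leq\hat\phi(x)$ (the summands are nonnegative), obtaining $\phi(\{n\})|x_n|\leq\hat\phi(x)$ for every $n$; in the relevant case $\phi(\{n\})>0$ for all $n$, the equality $\hat\phi(x)=0$ then forces $x_n=0$ for every $n$, so $x=0$.

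The heart of the matter is completeness of $\FIN(\phi)$. Given a sequence $(x^{(k)})_k$ that is Cauchy for $\hat\phi$, the inequality $\phi(\{n\})|x^{(k)}_n-x^{(j)}_n|\leq\hat\phi(x^{(k)}-x^{(j)})$ shows that each coordinate sequence $(x^{(k)}_n)_k$ is Cauchy in $\R$, so I would define the candidate limit $x=(x_n)_n$ by $x_n=\lim_k x^{(k)}_n$. To prove $x^{(k)}\to x$ in $\hat\phi$, fix $\varepsilon>0$ and choose $K$ with $\hat\phi(x^{(k)}-x^{(j)})<\varepsilon$ for all $k,j\geq K$. Then for every $\mu\in\mathcal{M}_\phi$ and every finite $F\subseteq\N$ we have $\sum_{n\in F}\mu(\{n\})|x^{(k)}_n-x^{(j)}_n|<\varepsilon$; letting $j\to\infty$ inside this finite sum gives $\sum_{n\in F}\mu(\{n\})|x^{(k)}_n-x_n|\leq\varepsilon$, and taking the supremum first over finite $F$ and then over $\mu$ yields $\hat\phi(x^{(k)}-x)\leq\varepsilon$ for $k\geq K$. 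In particular $\hat\phi(x)\leq\hat\phi(x^{(K)})+\varepsilon<\infty$, so $x\in\FIN(\phi)$ and $x^{(k)}\to x$.

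For the remaining assertions I would first verify $\EXH(\phi)\subseteq\FIN(\phi)$: if $\lim_n\hat\phi(x\cdot\chi_{\{n,n+1,\ldots\}})=0$, choose $N$ with $\hat\phi(x\cdot\chi_{\{N,N+1,\ldots\}})\leq1$ and use subadditivity together with the finiteness of $\hat\phi$ on finitely supported sequences to conclude $\hat\phi(x)\leq\hat\phi(x\cdot\chi_{\{1,\ldots,N-1\}})+1<\infty$. That $\EXH(\phi)$ is a linear subspace follows by applying subadditivity and homogeneity to the tails $x\cdot\chi_{\{n,n+1,\ldots\}}$. Finally, completeness of $\EXH(\phi)$ I would get by showing it is closed in the Banach space $\FIN(\phi)$: if $x^{(k)}\to x$ with each $x^{(k)}\in\EXH(\phi)$, then the estimate $\hat\phi(x\cdot\chi_{\{n,\ldots\}})\leq\hat\phi(x-x^{(k)})+\hat\phi(x^{(k)}\cdot\chi_{\{n,\ldots\}})$ (subadditivity and monotonicity) combined with an $\varepsilon/2$ choice of $k$ and then letting $n\to\infty$ shows the tail of $x$ tends to $0$, i.e. $x\in\EXH(\phi)$.

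I expect the main obstacle to be the interchange of limits in the completeness step: one must pass from the Cauchy control on $\hat\phi(x^{(k)}-x^{(j)})$, which already hides a supremum over the infinite family $\mathcal{M}_\phi$ and over infinite sums, to a genuine bound on $\hat\phi(x^{(k)}-x)$. Restricting first to finite index sets $F$ so that $j\to\infty$ can be taken inside a finite sum, and only afterwards recovering the full value via $\hat\phi(y)=\lim_n\hat\phi(y\cdot\chi_{\{1,\ldots,n\}})$ from the cited remark (legitimate because every summand is nonnegative), is precisely the device that makes the argument rigorous and sidesteps any appeal to a dominated convergence theorem.
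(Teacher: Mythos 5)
There is nothing in the paper to compare your argument against: this proposition is not proved there at all, but imported verbatim from \cite[Propositions 5.1 and 5.3]{FBN}. Judged on its own, your proof is correct and is the natural one: $\hat\phi$ is a supremum of seminorms, hence a seminorm; a $\hat\phi$-Cauchy sequence is coordinatewise Cauchy; the candidate limit is controlled by restricting to finite index sets $F$ (so that $j\to\infty$ can be taken inside a finite sum), and only afterwards taking suprema over $F$ and over $\mu\in\mathcal{M}_\phi$; and $\EXH(\phi)$ is complete because it is a $\hat\phi$-closed subspace of $\FIN(\phi)$. It is worth noting that this finite-sum-first device is exactly what the paper itself uses in the one completeness proof it does write out, namely for $\BV(\phi)$ (fix $\mu\in\mathcal{M}_\phi$, $K\in\N$ and $J\in\mathcal{P}_I$, pass to the pointwise limit in the finite sum, then take suprema), so your argument is also faithful to the paper's style.

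One caveat, which you half-acknowledge with the phrase ``in the relevant case $\phi(\{n\})>0$'': positive-definiteness genuinely fails without that hypothesis. If $\phi(\{k_0\})=0$ for some $k_0$, then $\hat\phi(e_{k_0})=\phi(\{k_0\})=0$ while $e_{k_0}\neq 0$, so $\hat\phi$ is only a seminorm, and your coordinatewise-Cauchy step is likewise unavailable at the coordinate $k_0$ (harmlessly for convergence, since such coordinates are invisible to $\hat\phi$, but fatally for uniqueness of limits). The statement as quoted omits this hypothesis; it is part of the standing assumptions in \cite{FBN}, and the present paper is aware of the issue, since it remarks that any lsc submeasure can be replaced by one with $\phi'(\{k\})>0$ for all $k$ inducing the same ideals, and it imposes positivity explicitly where it is needed (Proposition \ref{prop:mEXHsubsetmFIN}). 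So your proof is complete once you either assume $\phi(\{k\})>0$ for all $k$ at the outset, or state the norm claim for the quotient by $\left\{x\in\R^\N:\hat\phi(x)=0\right\}$.
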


\begin{proposition}
\label{prop:mEXHsubsetmFIN}
Let $\phi$ be a non-pathological lsc submeasure such that $\phi(\{k\})>0$ for all $k\in\N$. Then $\mFIN(\phi)$ is a closed subspace of $\FIN(\phi)$ and $\mEXH(\phi)$ is a closed subspace of $\EXH(\phi)$. In particular, $\mFIN(\phi)$ and $\mEXH(\phi)$ are Banach spaces normed by $\hat\phi$. Moreover, $\mEXH(\phi)\subseteq\mFIN(\phi)$.
\end{proposition}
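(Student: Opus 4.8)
My plan is to reduce the whole statement to one topological fact: that the defining constraint of $\Mon$ is preserved under $\hat\phi$-limits, so that $\mFIN(\phi)=\FIN(\phi)\cap\Mon$ and $\mEXH(\phi)=\EXH(\phi)\cap\Mon$ are closed subsets of the Banach spaces $\FIN(\phi)$ and $\EXH(\phi)$ (Proposition \ref{FBN}). Being closed subsets of complete metric spaces, they are then themselves complete in the metric $d(x,y)=\hat\phi(x-y)$; this metric is well defined because $\FIN(\phi)$ is a vector space, so that $x-y\in\FIN(\phi)$, and it separates points precisely because $\phi(\{k\})>0$ for all $k$. The last inclusion is immediate: intersecting $\EXH(\phi)\subseteq\FIN(\phi)$ with $\Mon$ yields $\mEXH(\phi)\subseteq\mFIN(\phi)$. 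Thus the only substantive point is the claimed preservation under limits.

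The heart of the matter, and the place where the hypothesis $\phi(\{k\})>0$ is indispensable, is that $\hat\phi$-convergence implies coordinatewise convergence. To prove this I would start from the identity $\hat\phi(z\cdot\chi_{\{k\}})=\phi(\{k\})\cdot|z_k|$ recorded in the Remark above, together with the monotonicity estimate $\hat\phi(z\cdot\chi_{\{k\}})\leq\hat\phi(z)$. The latter holds because, for each $\mu\in\mathcal{M}_\phi$, the single nonnegative term $\mu(\{k\})|z_k|$ is dominated by the full sum $\sum_{n\in\N}\mu(\{n\})|z_n|$, and taking the supremum over $\mu$ preserves the inequality. Applying both to $z=x-y$ gives
\[
|x_k-y_k|=\frac{\hat\phi\bigl((x-y)\cdot\chi_{\{k\}}\bigr)}{\phi(\{k\})}\leq\frac{\hat\phi(x-y)}{\phi(\{k\})}
\]
for every $k\in\N$, so that $\hat\phi(x^{(m)}-x)\to 0$ entails $x^{(m)}_k\to x_k$ for each fixed $k$. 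I expect this estimate to be the main (and essentially the only) obstacle; everything afterwards is a routine limiting argument.

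With coordinatewise convergence established, closedness follows easily. Take a sequence $(x^{(m)})_{m\in\N}$ in $\Mon$ with $\hat\phi(x^{(m)}-x)\to 0$. For every $m$ and $n$ we have $|x^{(m)}_{n+1}|\leq|x^{(m)}_n|$, and letting $m\to\infty$, using $x^{(m)}_n\to x_n$ and $x^{(m)}_{n+1}\to x_{n+1}$, the weak inequality passes to the limit to give $|x_{n+1}|\leq|x_n|$. Hence $x\in\Mon$, which is exactly the statement that $\FIN(\phi)\cap\Mon$ and $\EXH(\phi)\cap\Mon$ are closed in their respective spaces. Combining this with the completeness and the inclusion noted in the first paragraph finishes the proof.
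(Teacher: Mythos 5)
Your proposal is correct and takes essentially the same approach as the paper: both reduce the statement to the closedness of $\Mon$ in $\FIN(\phi)$ and $\EXH(\phi)$, with completeness and the inclusion $\mEXH(\phi)\subseteq\mFIN(\phi)$ then read off from Proposition~\ref{FBN}. The only (cosmetic) difference is that the paper verifies closedness by exhibiting an explicit $\hat\phi$-ball around each $x\notin\Mon$ that misses $\Mon$, whereas you pass the defining inequalities to the limit via coordinatewise convergence; both arguments hinge on the identical key estimate $\phi(\{k\})\,|z_k|=\hat\phi(z\cdot\chi_{\{k\}})\leq\hat\phi(z)$.
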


\begin{proof}
The inclusion $\mEXH(\phi)\subseteq\mFIN(\phi)$ follows from Proposition \ref{FBN}. 

Actually, it suffices to show that $\Mon$ is closed in $\FIN(\phi)$ and in $\EXH(\phi)$. Let $x=(x_n)\in \R^\N\setminus \Mon$. Then there is $n\in\N$ such that $|x_{n+1}|>|x_n|$. Define $r=\frac{|x_{n+1}|-|x_n|}{3}\min\{\hat\phi(e_n),\hat\phi(e_{n+1})\}$, where $e_n\in\R^\N$ is the sequence given by:
\[
(e_n)_i=\begin{cases}
    1, & \text{if }i=n,\\
    0, & \text{otherwise.}
\end{cases}
\]
Note that $r>0$ since $\hat\phi(e_k)=\phi(\{k\})>0$ for all $k\in\N$.

We claim that if $y\in\R^\N$ is such that $\hat\phi(x-y)<r$, then $r\notin \Mon$ (which shows that $\Mon$ is closed in $\FIN(\phi)$ and in $\EXH(\phi)$). Indeed, if $\hat\phi(x-y)<r$, then:
\[
|x_n-y_n|\hat\phi(e_n)\leq\hat\phi(x-y)<r\leq\frac{|x_{n+1}|-|x_n|}{3}\hat\phi(e_n),
\]
so $|x_n-y_n|<\frac{|x_{n+1}|-|x_n|}{3}$. Similarly, $|x_{n+1}-y_{n+1}|<\frac{|x_{n+1}|-|x_n|}{3}$. Hence, $|y_{n+1}|-|y_n|>\frac{|x_{n+1}|-|x_n|}{3}$ and we get that $|y_{n+1}|>|y_n|$, which shows $y\notin \Mon$.
\end{proof}

\begin{proposition}{\cite[Theorem 5.4]{FBN}}
\label{Exh=Fin}
The following are equivalent for any non-pathological lsc submeasure $\phi$:
\begin{itemize}
    \item $\Exh(\phi)=\Fin(\phi)$;
    \item $\EXH(\phi)=\FIN(\phi)$;
    \item $\FIN(\phi)$ is separable.
\end{itemize}
\end{proposition}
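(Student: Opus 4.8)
The plan is to prove the cycle by first establishing the three \emph{easy} implications into the two ``equal'' conditions, and then isolating the single hard implication $\Exh(\phi)=\Fin(\phi)\Rightarrow\EXH(\phi)=\FIN(\phi)$.

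\textbf{(a)} If $\EXH(\phi)=\FIN(\phi)$ then $\Exh(\phi)=\Fin(\phi)$: the inclusion $\Exh\subseteq\Fin$ always holds, and conversely for $C\in\Fin(\phi)$ Remark \ref{rem1} gives $\chi_C\in\FIN(\phi)=\EXH(\phi)$, whence $C\in\Exh(\phi)$. \textbf{(b)} If $\EXH(\phi)=\FIN(\phi)$ then $\FIN(\phi)$ is separable: every $x\in\EXH(\phi)$ satisfies $\hat\phi(x-x\cdot\chi_{\{1,\ldots,N\}})\to0$, so finitely supported sequences are dense, and among those the rational ones are dense because $\hat\phi(y\cdot\chi_F)\le(\sup_{k\in F}\abs{y_k})\sum_{k\in F}\phi(\{k\})$ for finite $F$. \textbf{(c)} If $\FIN(\phi)$ is separable then $\EXH(\phi)=\FIN(\phi)$: I argue the contrapositive. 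Given $x\in\FIN(\phi)\setminus\EXH(\phi)$, which we may take $\ge 0$ since $\hat\phi$ depends only on $\abs{x}$, the quantity $\hat\phi(x\cdot\chi_{\{N,\ldots\}})$ decreases to some $L>0$. Using $\hat\phi(y)=\lim_m\hat\phi(y\cdot\chi_{\{1,\ldots,m\}})$ I carve out consecutive finite blocks $F_1<F_2<\cdots$ with $\hat\phi(x\cdot\chi_{F_k})\ge L/2$, and for $S\subseteq\N$ set $x_S=x\cdot\chi_{\bigcup_{k\in S}F_k}$. Each $x_S\in\FIN(\phi)$, and for $S\ne S'$, choosing $k\in S\triangle S'$ gives $\hat\phi(x_S-x_{S'})\ge\hat\phi(x\cdot\chi_{F_k})\ge L/2$; thus $\{x_S\}_{S\subseteq\N}$ is an uncountable $\tfrac{L}{2}$-separated family and $\FIN(\phi)$ is non-separable. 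Together (a)--(c) yield $\EXH(\phi)=\FIN(\phi)\Leftrightarrow\FIN(\phi)\text{ separable}$ and $\EXH(\phi)=\FIN(\phi)\Rightarrow\Exh(\phi)=\Fin(\phi)$, so only $\Exh(\phi)=\Fin(\phi)\Rightarrow\EXH(\phi)=\FIN(\phi)$ remains.

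I attack this last implication through its contrapositive: from a witness $x\in\FIN(\phi)\setminus\EXH(\phi)$ (again $x\ge 0$, with $\hat\phi(x)=S<\infty$ and $\lim_N\hat\phi(x\cdot\chi_{\{N,\ldots\}})=L>0$) I must produce $C\in\Fin(\phi)\setminus\Exh(\phi)$. Two reductions narrow the situation. For $t>0$ the level set $C_t=\{n:x_n>t\}$ lies in $\Fin(\phi)$, since every measure $\mu\le\phi$ satisfies $t\mu(C_t)\le\sum_{n\in C_t}\mu(\{n\})x_n\le\hat\phi(x)$, and non-pathologicality upgrades this to $t\phi(C_t)\le S$; hence if some $C_t\notin\Exh(\phi)$ we are done, and I may assume every level set is exhaustive. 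Treating first the representative case of bounded $x\le T$, I split at a level $s>0$:
\[
\hat\phi(x\cdot\chi_{\{N,\ldots\}})\le T\,\phi(C_s\cap\{N,\ldots\})+\hat\phi\bigl(x\cdot\chi_{\{0<x\le s\}\cap\{N,\ldots\}}\bigr),
\]
and since $C_s$ is exhaustive the first term tends to $0$, so the low-value part carries tail-mass $\ge L$ for every $s$. Consequently there are disjoint index windows $W_m\to\infty$ and measures $\mu_m\le\phi$ with $\sum_{n\in W_m,\,0<x_n\le s}\mu_m(\{n\})x_n\ge L/4$, and as the values are $\le s$ this forces $\mu_m\bigl(W_m\cap\{0<x\le s\}\bigr)\ge L/(4s)=:\alpha$.

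The construction then uses $\sigma$-additivity of each $\mu_m$ to pick a finite $F_m\subseteq W_m\cap\{0<x\le s\}$ with $\mu_m(F_m)\ge\alpha/2$, hence $\phi(F_m)\ge\alpha/2$, and sets $C=\bigcup_m F_m$; since every tail $\{N,\ldots\}$ contains an entire $F_m$ we get $\phi(C\cap\{N,\ldots\})\ge\alpha/2$ for all $N$, so $C\notin\Exh(\phi)$. \textbf{The main obstacle is showing $\phi(C)<\infty$.} Each piece has measure at least $\alpha/2$, so a crude bound on $\phi\bigl(\bigcup_m F_m\bigr)=\sup_{\mu\le\phi}\sum_m\mu(F_m)$ gives nothing; indeed when $\phi$ is a single measure the union \emph{must} have infinite measure, which is precisely the case where no witness $x$ exists (every $x\in\FIN(\phi)$ then already lies in $\EXH(\phi)$). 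The feature to exploit is therefore that a genuine witness forces the $\mu_m$ to be mutually \emph{incompatible}: no single $\mu\le\phi$ can charge substantially many of the $F_m$, so that $\sup_{\mu\le\phi}\sum_m\mu(F_m)$ stays bounded. I expect to secure this by building the $F_m$ through a fusion/diagonal argument that, using lower semicontinuity of $\phi$ and the representation $\phi=\sup\mathcal M_\phi$, keeps the increments $\phi\bigl(\bigcup_{m\le M+1}F_m\bigr)-\phi\bigl(\bigcup_{m\le M}F_m\bigr)$ summable while preserving $\phi(F_m)\ge\alpha/2$; the tension between pushing $C$ into ever-lower level sets (needed for finite measure) and keeping its tail-measure positive (needed for $C\notin\Exh(\phi)$) is exactly what this argument must reconcile, and the unbounded case is then handled by a parallel analysis at high values.
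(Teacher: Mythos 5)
First, a remark on the comparison itself: the paper does not prove this proposition at all --- it is quoted verbatim from \cite[Theorem~5.4]{FBN} --- so your attempt can only be judged as a standalone proof. Within it, three of the four implications you give are correct and essentially standard: the deduction of $\Exh(\phi)=\Fin(\phi)$ from $\EXH(\phi)=\FIN(\phi)$ via characteristic functions, the separability of $\EXH(\phi)$ via finitely supported rational sequences (using $\hat\phi(z)\leq\sum_{k\in F}\phi(\{k\})|z_k|$ on finite supports), and the contrapositive of ``separable $\Rightarrow$ $\EXH(\phi)=\FIN(\phi)$'' via the uncountable $\tfrac{L}{2}$-separated family $\{x_S\}_{S\subseteq\N}$ built from consecutive blocks of a witness $x\in\FIN(\phi)\setminus\EXH(\phi)$. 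These parts are fine.

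The genuine gap is the remaining implication, $\Exh(\phi)=\Fin(\phi)\Rightarrow\EXH(\phi)=\FIN(\phi)$, which is the actual content of the theorem, and it is not proven. Your reductions (level sets $C_t\in\Fin(\phi)$ by non-pathologicality; assume all $C_t\in\Exh(\phi)$; extract windows $W_m$ and measures $\mu_m\in\mathcal{M}_\phi$ with $\mu_m(W_m\cap\{0<x\leq s\})\geq\alpha$; pick finite $F_m$ with $\phi(F_m)\geq\alpha/2$) correctly produce a set $C=\bigcup_m F_m$ with $C\notin\Exh(\phi)$, but membership $C\in\Fin(\phi)$ --- i.e.\ $\phi(C)<\infty$ --- is exactly what must be established, and at that point the argument stops: you yourself note that the crude bound $\phi(C)=\sup_{\mu\in\mathcal{M}_\phi}\sum_m\mu(F_m)$ gives nothing, and the concluding sentence only states that you ``expect to secure this by a fusion/diagonal argument.'' That is a declaration of intent, not a proof, and it sits precisely at the point where the hypothesis $\Exh(\phi)=\Fin(\phi)$ (equivalently, the non-measure-like incompatibility of the $\mu_m$) must be exploited; as you set it up, an arbitrary choice of windows and of $F_m$ with $\phi(F_m)\geq\alpha/2$ can perfectly well yield $\phi(C)=\infty$, so additional ideas (a careful coupling of the levels $s_m\to 0$, the window lengths, and the growth of $\phi(\bigcup_{m\leq M}F_m)$) are indispensable, not cosmetic. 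The unbounded case, deferred to ``a parallel analysis at high values,'' is likewise never carried out (and the truncation $x\mapsto\min(x,T)$ does not obviously preserve the failure of exhaustiveness when $C_T\in\Exh(\phi)$ but $x$ is unbounded on it). So the proposal establishes the equivalence of the second and third conditions and one implication from the second to the first, but not the full proposition.
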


\begin{remark}
\label{Exh=Fin-rem}
Obviously, if $\EXH(\phi)=\FIN(\phi)$, then also $\mEXH(\phi)=\mFIN(\phi)$.
\end{remark}

\subsection{Variations}

\begin{definition}
Let $\phi$ be a non-pathological lsc submeasure. For $J=(J_n)\in\mathcal{P}_I$ denote by $x(J)$ the sequence $(|x(J_n)|)$. Define:
\[
\BV(\phi)=\left\{x\in B(I): \sup_{J\in\mathcal{P}_I}\hat\phi(x(J))<\infty\right\}.
\]
\end{definition}

\begin{remark}The requirement $x\in B(I)$ may be removed. Indeed, assume that 
there exists such a sequence $(t_n)\subseteq I$ that $|x(t_n)|\to+\infty$. Fix any $k\in\N$ such that $\phi(\{k\})>0$. Then if we take for each $n\in\N$ any $J^n=(J^n_i)\in\mathcal{P}_I$ such that $J^n_k = [0,t_n]$ we have
\[ \sup_{J\in\mathcal{P}_I}\hat\phi(x(J)) \geq |x(t_n)-x(0)|\phi(\{k\}) \to +\infty,  \]
which means that the condition $\sup_{J\in\mathcal{P}_I}\hat\phi(x(J))<\infty$ will not be satisfied anyway.
\end{remark}

\begin{proposition}
Let $\phi$ be a non-pathological lsc submeasure. Then $\BV(\phi)$ is a Banach space normed by 
\[
\|x\|_\phi=|x(0)|+\sup_{J\in\mathcal{P}_I}\hat\phi(x(J)),
\]
for all $x\in\BV(\phi)$. 
\end{proposition}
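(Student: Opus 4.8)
The plan is to verify first that $\|\cdot\|_\phi$ is a norm and that $\BV(\phi)$ is a linear space, and then to establish completeness. Throughout I write $V_\phi(x)=\sup_{J\in\mathcal{P}_I}\hat\phi(x(J))$ for the variation part, so that $\|x\|_\phi=|x(0)|+V_\phi(x)$. The whole argument rests on three elementary properties of $\hat\phi$ read off from its definition as a supremum of weighted sums $\sum_n\mu(\{n\})|x_n|$ over $\mu\in\mathcal{M}_\phi$: it is positively homogeneous, it is subadditive (i.e. $\hat\phi(a+b)\le\hat\phi(a)+\hat\phi(b)$), and it is monotone on nonnegative sequences (if $0\le a_n\le b_n$ for all $n$, then $\hat\phi(a)\le\hat\phi(b)$). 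I will also repeatedly use the lower bound $\hat\phi(y)\ge\phi(\{k\})|y_k|$ recorded in the Remark noting $\hat\phi(x\cdot\chi_{\{k\}})=\phi(\{k\})|x_k|$, together with a coordinate $k$ for which $\phi(\{k\})>0$ (such $k$ exists, since otherwise $\hat\phi\equiv0$ and there is nothing of interest).

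For the norm axioms, homogeneity is immediate because $(\lambda x)(J)=|\lambda|\,x(J)$ coordinatewise and $\hat\phi$ is homogeneous, so $V_\phi(\lambda x)=|\lambda|V_\phi(x)$. For the triangle inequality I would use the pointwise estimate $|(x+y)(J_n)|\le|x(J_n)|+|y(J_n)|$, that is, $(x+y)(J)\le x(J)+y(J)$ coordinatewise; monotonicity and subadditivity of $\hat\phi$ then give $\hat\phi((x+y)(J))\le\hat\phi(x(J))+\hat\phi(y(J))\le V_\phi(x)+V_\phi(y)$, and taking the supremum over $J$ yields $V_\phi(x+y)\le V_\phi(x)+V_\phi(y)$; this simultaneously shows $\BV(\phi)$ is closed under addition. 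Definiteness is the only place where the geometry of $I$ enters: if $\|x\|_\phi=0$ then $x(0)=0$, and for each $t\in I$ the partition placing $[0,t]$ at the coordinate $k$ (with degenerate intervals elsewhere) gives $\phi(\{k\})|x(t)-x(0)|\le V_\phi(x)=0$, whence $x(t)=0$.

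For completeness I would take a $\|\cdot\|_\phi$-Cauchy sequence $(x^{(m)})$ and first control it pointwise. Writing $d=x^{(m)}-x^{(l)}$, the same partition trick gives $|d(t)-d(0)|\le V_\phi(d)/\phi(\{k\})$, so $|x^{(m)}(t)-x^{(l)}(t)|\le(1+1/\phi(\{k\}))\|x^{(m)}-x^{(l)}\|_\phi$ uniformly in $t$. Hence $(x^{(m)})$ is uniformly Cauchy and converges uniformly to a bounded function $x\in B(I)$. It remains to show $x\in\BV(\phi)$ and $\|x^{(m)}-x\|_\phi\to0$.

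The decisive step is to estimate $V_\phi(x^{(m)}-x)$ without prematurely exchanging limits. Given $\varepsilon>0$, fix $M$ with $\|x^{(p)}-x^{(q)}\|_\phi<\varepsilon$ for $p,q\ge M$. For a \emph{finite} index $N$, a measure $\mu\in\mathcal{M}_\phi$ and a partition $J$, the finite sum $\sum_{n=1}^N\mu(\{n\})|(x^{(m)}-x^{(l)})(J_n)|$ is at most $\|x^{(m)}-x^{(l)}\|_\phi<\varepsilon$ for $m,l\ge M$; letting $l\to\infty$ (using pointwise convergence of $x^{(l)}$ to $x$ at the finitely many endpoints involved) gives $\sum_{n=1}^N\mu(\{n\})|(x^{(m)}-x)(J_n)|\le\varepsilon$ for $m\ge M$. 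Now taking suprema successively over $N$, over $\mu\in\mathcal{M}_\phi$, and over $J\in\mathcal{P}_I$ yields $V_\phi(x^{(m)}-x)\le\varepsilon$ for $m\ge M$. Together with $|x^{(m)}(0)-x(0)|\to0$ this gives $\|x^{(m)}-x\|_\phi\to0$, and since $x=x^{(M)}-(x^{(M)}-x)$ with both summands in the vector space $\BV(\phi)$, we conclude $x\in\BV(\phi)$. I expect this interchange of limits --- the reduction to finite truncations $N$ before taking the three suprema --- to be the main technical obstacle; everything else is a routine transfer of the elementary properties of $\hat\phi$ through the supremum defining $V_\phi$.
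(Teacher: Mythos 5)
Your proof is correct and follows essentially the same route as the paper's: the coordinate-$k$ trick with $\phi(\{k\})>0$ for definiteness and pointwise control of the Cauchy sequence, followed by the finite-truncation limit interchange (bounding $\sum_{n=1}^{N}\mu(\{n\})|(x^{(m)}-x^{(l)})(J_n)|$, letting $l\to\infty$, then taking suprema over $N$, $\mu$ and $J$) to get $\|x^{(m)}-x\|_\phi\to 0$ and $x\in\BV(\phi)$ via the seminorm triangle inequality. The only differences are cosmetic: you note uniform (rather than merely pointwise) convergence and spell out the monotonicity of $\hat\phi$, which the paper leaves implicit.
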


\begin{proof} 
Let us assume that $x\in B(I)$ is such that $x\neq 0$ and $x(0)=0$. Then,  there exists such $t\in I$ that $x(t)\neq 0$. Let us assume that $\phi(\{k\})>0$ for certain $k\in\N$ and let $J\in\mathcal{P}_I$ be such sequence of intervals that $J_k=[0,t]$. Then
\[
\hat\phi(|x(J)|) \geq |x(t)-x(0)|\phi(\{k\}) > 0.
\]

Let $c\in\R$ be any constant. The condition $\hat\phi(|c\cdot x(J)|) = c\hat\phi(|x(J)|)$ is obvious for any $J\in\mathcal{P}_I$. Similarly the triangle inequality
\[ \hat\phi(|(x+y)(J)|) \leq \hat\phi(|x(J)|) + \hat\phi(|y(J)|)
\]
for any functions $x,y\colon I\to\R$.
Passing to the supremum for $J\in {\mathcal P}_I$ keeps these conditions. 

Now it remains to prove that the space is complete. The proof will be a standard one. Let us take a Cauchy sequence $(x_n)\subseteq \BV(\phi)$. Let us fix $\varepsilon>0$ and take such $n,m\geq N$ that 
\[
|x_n(0)-x_m(0)| + \sup_{J\in\mathcal{P}_I}\hat\phi((x_n-x_m)(J)) \leq \varepsilon.
\]
First of all, let us observe that it means that the sequence $(x_n(0))$ is a real Cauchy sequence so it converges to some real number $x(0)$. We can also observe that the sequence $x_n(t)$ converges to $x(t)$ for any $t\in I$. Indeed, fix any $t\in I$ and as before, let us assume that $\phi(\{k\})>0$ for certain $k\in\N$. Let $J\in\mathcal{P}_I$ be such sequence of intervals that $J_k=[0,t]$. Then
\[
|(x_n-x_m)(J_k)|\phi(\{k\}) \leq \varepsilon
\]
and
\[
|x_n(t)-x_m(t)| \leq\frac{1}{\phi(\{k\})} \varepsilon + |x_n(0) - x_m(0)|,
\]
which eventually proves that $(x_n(t))$ is a Cauchy sequence, so it converges to some $x(t)$. 

Now, as we have the pointwise limit $x(t)$, we are going to show that $x\in BV(\phi)$ and that 
$\|x_n-x\|_\phi\to 0$ as $n\to +\infty$. Let us take any $\mu\in \mathcal{M}_\phi$, any natural number $K\in \N$ and any sequence of intervals $J\in{\mathcal P}_I$. Then we have
\[
\sum_{k=1}^K |(x_n-x_m)(J_k)|\mu(\{k\}) \leq \varepsilon
\]
and we may pass to the limit with $m\to+\infty$ (as we know the sequence $x_m(t))$ converges pointwise to $x(t)$). This gives us
\[
\sum_{k=1}^K |(x_n-x)(J_k)|\mu(\{k\}) \leq \varepsilon.
\]
Since the inequality holds for all $K$, $\mu$ and $J$ we can see that 
\[
\sup_{J\in\mathcal{P}_I}\hat\phi((x_n-x)(J)) \leq \varepsilon.
\]
To show that $x\in \BV(\phi)$ it is enough to take any fixed $x_n\in\BV(\phi)$ such that 
\[
\sup_{J\in\mathcal{P}_I}\hat\phi((x_n-x)(J)) \leq 1
\]
and refer to seminorm properties to see that 
\[
\sup_{J\in\mathcal{P}_I}\hat\phi(x(J)) - \sup_{J\in\mathcal{P}_I}\hat\phi(x_n(J)) \leq \sup_{J\in\mathcal{P}_I}\hat\phi((x_n-x)(J)) \leq 1
\]
\end{proof}

\begin{proposition}
Let $\phi$ be a non-pathological lsc submeasure.
\begin{itemize}
    \item[(a)] If the sequence $\phi(\{k\})$ is unbounded, then $BV(\phi)$ reduces to the space of constant functions.
    \item[(b)] If the sequence $\phi(\{k\})$ is bounded, then the space $\BV(\phi)$ contains the space of functions of bounded classical Jordan variation as a subset.
\end{itemize}
\end{proposition}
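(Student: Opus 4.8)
The plan is to reduce both parts to the basic identity $\hat\phi(x\cdot\chi_{\{k\}})=\phi(\{k\})\cdot|x_k|$ recorded in the Remark preceding Remark~\ref{rem1}, together with the observation (from the Remark in Section~2.2 identifying the constant sequence with Jordan variation) that the classical Jordan variation is exactly $\var(x)=\sup_{J\in\mathcal{P}_I}\sum_n|x(J_n)|$, i.e. the $A$-variation for $A\equiv 1$. These two facts let me pass freely between $\hat\phi$ evaluated on a sequence of increments and the individual weights $\phi(\{k\})$. Throughout I use that $\hat\phi$ is monotone under the coordinatewise absolute value, since it is a supremum of sums with nonnegative coefficients.

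For (a), suppose $\phi(\{k\})$ is unbounded and let $x\in\BV(\phi)$ be non-constant, so there are $s,t\in I$ with $\delta:=|x(t)-x(s)|>0$. For each $k\in\N$ I build $J=(J_n)\in\mathcal{P}_I$ by setting $J_k=[s,t]$ and taking every other $J_n$ to be a single point (say $\{t\}$); these are nonoverlapping because degenerate intervals have empty interior, and $|x(J_n)|=0$ for $n\neq k$ while $|x(J_k)|=\delta$. Then, by monotonicity and the Remark, $\sup_{J'\in\mathcal{P}_I}\hat\phi(x(J'))\geq\hat\phi(x(J))\geq\hat\phi\bigl(x(J)\cdot\chi_{\{k\}}\bigr)=\phi(\{k\})\,\delta$. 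Letting $k$ run through indices with $\phi(\{k\})\to\infty$ forces $\sup_{J}\hat\phi(x(J))=\infty$, contradicting $x\in\BV(\phi)$. Hence every element of $\BV(\phi)$ is constant; conversely a constant function gives $x(J)=0$ for all $J$, so $\hat\phi(x(J))=0$ and it lies in $\BV(\phi)$.

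For (b), suppose $\phi(\{k\})\leq C<\infty$ for all $k$, and let $x$ have bounded Jordan variation. Since $|x(t)|\leq|x(0)|+\var(x)$, such $x$ is bounded, so $x\in B(I)$. For any $J\in\mathcal{P}_I$ and any $\mu\in\mathcal{M}_\phi$ we have $\mu(\{n\})\leq\phi(\{n\})\leq C$, whence $\sum_n\mu(\{n\})|x(J_n)|\leq C\sum_n|x(J_n)|\leq C\,\var(x)$; note this holds even when $\mu(\N)=\infty$, because the weights $|x(J_n)|$ are summable. Taking the supremum over $\mu$ gives $\hat\phi(x(J))\leq C\,\var(x)$, and then the supremum over $J$ gives $\sup_{J\in\mathcal{P}_I}\hat\phi(x(J))\leq C\,\var(x)<\infty$, so $x\in\BV(\phi)$.

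The computations are routine; the only points needing care are verifying that the sequence $J$ constructed in (a) is a legitimate member of $\mathcal{P}_I$ (handled by using degenerate intervals, whose empty interiors keep the family nonoverlapping) and invoking the correct identification of Jordan variation with $\sup_{J}\sum_n|x(J_n)|$ in (b). Non-pathologicality of $\phi$ is used only through the identity $\hat\phi(x\cdot\chi_{\{k\}})=\phi(\{k\})|x_k|$, which supplies the lower bound in (a); I do not expect any genuine obstacle beyond this bookkeeping.
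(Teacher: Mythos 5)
Your proof is correct and takes essentially the same approach as the paper's: in (a) both arguments place $[s,t]$ at the $k$-th position of a sequence in $\mathcal{P}_I$ to obtain the lower bound $\phi(\{k\})\cdot\delta$ and let $k$ range over indices where $\phi(\{k\})$ blows up, and in (b) both bound $\hat\phi(x(J))$ by $\sum_{k}\phi(\{k\})|x(J_k)|\leq C\var(x)$. The only differences are that you spell out details the paper leaves implicit (filling out $J$ with degenerate intervals, the boundedness of $x$, and the trivial fact that constants lie in $\BV(\phi)$).
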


\begin{proof}
(a): If $x\in B(I)$ is such that $|x(s)-x(t)|=a>0$ for some $[s,t]\in (0,1)$, then taking for each $k\in\N$ such $J^k=(J^k_n)_{n\in\N}\in{\mathcal P}_I$ that $J^k_k = [s,t]$ we get 
\[
\hat\phi(x(J)) \geq a\phi(\{k\}).
\]
Thus the variation $\sup_{J\in\mathcal{P}_I}\hat\phi(|x(J)|)$ is unbounded.

(b): Let us take any $x\in \BV(I)$ and any $J\in{\mathcal P}_I$. Then 
\[
\hat\phi(|x(J)|) \leq \sum_{k\in\N} \phi(\{k\})|x(J_k)|) \leq M \var(x).
\]
The last inequality actually shows that the space $\BV(I)$ is continuously embedded in $\BV(\phi)$.
\end{proof}

\begin{remark} Let us observe that if $\phi(\N)<+\infty$, then $BV(\phi)=B(I)$. To see this let us take any bounded function $x\in B(I)$ such that $|x(t)|\leq M$ for all $t\in I$. Then for any interval $J\subset I$ we get $|x(J)|\leq 2M$ and for any $\mu\in{\mathcal M}_\phi$ and any  $(J_k)\in{\mathcal P}_I$ we have
\[
\sum_{k\in N} |x(J_k)|\mu\{k\} \leq 2M \mu(\N)
\]
leading to $\hat\phi(x) \leq 2M \phi(\N)<+\infty.$
\end{remark}

\begin{proposition}Let $\phi$ be a non-pathological lsc submeasure such that $\phi(\N)=+\infty$. Then the space $BV(\phi)$ is a subset of the space of all {\em bounded regulated functions} (i.e. bounded functions having finite left and right limit in every point of their domain).
\end{proposition}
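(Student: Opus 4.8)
The plan is to argue by contraposition. Since membership in $BV(\phi)$ already forces $x\in B(I)$, boundedness of $x$ is automatic, and it remains only to verify that every $x\in BV(\phi)$ admits finite one-sided limits at each point of $I$. So I would assume that some $x\in B(I)$ fails to have, say, a right limit at a point $t_0\in[0,1)$, and manufacture a sequence $J\in\mathcal{P}_I$ witnessing $\sup_{J\in\mathcal{P}_I}\hat\phi(x(J))=+\infty$, which places $x$ outside $BV(\phi)$.

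The engine of the argument is the following observation. Suppose one can find infinitely many nonoverlapping closed intervals $I_1,I_2,\ldots\subseteq I$ together with a constant $\varepsilon>0$ such that $|x(I_n)|\geq\varepsilon$ for every $n$. Taking $J=(I_n)_{n\in\N}\in\mathcal{P}_I$, for every measure $\mu\in\mathcal{M}_\phi$ we have $\sum_{n\in\N}\mu(\{n\})|x(I_n)|\geq\varepsilon\sum_{n\in\N}\mu(\{n\})=\varepsilon\,\mu(\N)$. Passing to the supremum over $\mu$ and invoking non-pathologicality together with the hypothesis $\phi(\N)=+\infty$ gives $\hat\phi(x(J))\geq\varepsilon\sup_{\mu\in\mathcal{M}_\phi}\mu(\N)=\varepsilon\,\phi(\N)=+\infty$, so $x\notin BV(\phi)$. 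Thus everything reduces to producing such a family of intervals from the failure of a one-sided limit.

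For the construction, if the right limit of $x$ at $t_0$ does not exist then, $x$ being bounded, the quantities $\ell\zdef\liminf_{t\to t_0^+}x(t)$ and $\limsup_{t\to t_0^+}x(t)\defz L$ are both finite with $\ell<L$; put $\omega=L-\ell>0$. By the definitions of $\limsup$ and $\liminf$, every right neighbourhood of $t_0$ contains a point $s$ with $x(s)>\ell+\tfrac34\omega$ and a point $u$ with $x(u)<\ell+\tfrac14\omega$, so the closed interval with endpoints $s,u$ satisfies $|x(\cdot)|>\tfrac{\omega}{2}$. I would then build the $I_n$ recursively: having chosen $I_1,\ldots,I_{n-1}$, pick the next pair $s,u$ inside a right neighbourhood of $t_0$ small enough to lie strictly to the left of all previously chosen intervals, so that the resulting interval $I_n$ is nonoverlapping with the earlier ones while still satisfying $|x(I_n)|>\tfrac{\omega}{2}$. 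This yields the desired family with $\varepsilon=\omega/2$, and the case of a missing left limit at a point $t_0\in(0,1]$ is entirely symmetric, using left neighbourhoods of $t_0$.

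The only genuinely delicate point is this recursive selection: one must simultaneously ensure that each new interval captures an oscillation of size at least $\omega/2$ and that it stays disjoint from (or at most abuts) the intervals already chosen, which is exactly what shrinking the neighbourhood toward $t_0$ secures. Everything else—the automatic boundedness, the measure-theoretic lower bound $\hat\phi(x(J))\geq\varepsilon\,\phi(\N)$, and the reduction to interval construction—is routine.
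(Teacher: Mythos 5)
Your proof is correct and follows essentially the same route as the paper's: both deduce from the failure of a one-sided limit the existence of infinitely many nonoverlapping closed intervals whose oscillations are bounded below by a fixed $\varepsilon>0$, and then use non-pathologicality to get $\hat\phi(x(J))\geq\varepsilon\sup_{\mu\in\mathcal{M}_\phi}\mu(\N)=\varepsilon\,\phi(\N)=+\infty$. The only difference is presentational: you extract the oscillating intervals explicitly via $\liminf$/$\limsup$ and a recursive shrinking of right neighbourhoods, whereas the paper starts directly from two monotone sequences converging to the same point from the same side and passes to subsequences — your write-up just fills in the details the paper leaves implicit.
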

\begin{proof}Assume, contrary to our claim, that there exist monotone sequences $(t_n)\subseteq I$ and $(s_n)\subseteq I$ converging to $a\in I$ from the same side and such that $x(s_n) - x(t_m)\geq \delta > 0$ for all $n,m\in\N$. Then, taking subsequences if necessary, we have the sequence of nonoverlapping intervals $I_n=[s_n,t_n]\subset I$ such that $|x(I_n)|\geq \delta$. Then for any $\mu\in {\mathcal M}_\phi$ we have
\[
\sum_{n\in\N} |x(I_n)|\mu(\{n\}) \geq \delta \mu(\N)
\]
so
\[
\hat\phi(|x(I_n)|) \geq \delta \phi(\N) = +\infty.
\]    
\end{proof}

\begin{remark}\label{def_var_permutation}
For any non-pathological lsc submeasure $\phi$ and any permutation $\pi\colon\N\to\N$, the function $\psi:\mathcal{P}(\N)\to[0,\infty]$ given by
\[
\psi(C) = \phi(\pi[C]), 
\]
for all $C\subseteq\N$, also is a non-pathological lsc submeasure. What is more, we can easily see that  $\BV(\phi) = \BV(\psi)$.
\end{remark}

\begin{proposition}\label{prop:phi=psi}
Let $\phi$ and $\psi$ be two non-pathological submeasures. If there exists $M>0$ such that for every $A\subseteq\N$ we have $|\phi(A)-\psi(A)|\leq M$ then $\BV(\phi)=\BV(\psi)$.
\end{proposition}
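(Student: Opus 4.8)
The plan is to prove the two inclusions $\BV(\phi)\subseteq\BV(\psi)$ and $\BV(\psi)\subseteq\BV(\phi)$; by the symmetry of the hypothesis it suffices to establish one of them, say the first. Fix $x\in\BV(\phi)$; since $x\in B(I)$ we have $\|x\|_\infty<\infty$, and for every $J=(J_n)\in\mathcal{P}_I$ the sequence $x(J)=(|x(J_n)|)$ is nonnegative with $\|x(J)\|_\infty\le 2\|x\|_\infty$. Writing $V_\phi(x)=\sup_{J\in\mathcal{P}_I}\hat\phi(x(J))<\infty$, the whole statement follows once I show the pointwise comparison
\[
\hat\psi(y)\le\hat\phi(y)+M\|y\|_\infty\qquad\text{for every }y\in\ell_\infty,\ y\ge 0,
\]
because applying it to $y=x(J)$ and taking the supremum over $J$ yields $V_\psi(x)\le V_\phi(x)+2M\|x\|_\infty<\infty$.

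To prove this comparison I would first reduce to finitely supported $y$ (using $\hat\phi(y)=\lim_n\hat\phi(y\cdot\chi_{\{1,\dots,n\}})$ and the analogous fact for $\psi$), so that $\hat\psi(y)=\max\{\langle\nu,y\rangle:\nu\in\mathcal{M}_\psi\}$ is attained on the compact set of measures supported on the finite support $U$ of $y$, where $\langle\nu,y\rangle=\sum_n\nu(\{n\})y_n$. The heart of the argument is then the following \emph{pushing--down lemma}: for every measure $\nu\le\psi$ supported on $U$ there is a measure $\mu\le\phi$ with $\mu\le\nu$ coordinatewise and $\nu(U)-\mu(U)\le M$. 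Granting it, set $\rho=\nu-\mu\ge 0$; then $\rho(U)\le M$ and, since $0\le y_n\le\|y\|_\infty$,
\[
\langle\nu,y\rangle=\langle\mu,y\rangle+\langle\rho,y\rangle\le\hat\phi(y)+\|y\|_\infty\,\rho(U)\le\hat\phi(y)+M\|y\|_\infty,
\]
and taking the maximum over $\nu\in\mathcal{M}_\psi$ gives the desired inequality.

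It remains to prove the pushing--down lemma, and this is the step I expect to be the main obstacle. Maximizing $\mu(U)$ over the compact convex polytope $\{\mu\ge 0:\mu\le\nu,\ \mu(A)\le\phi(A)\ \forall A\subseteq U\}$ is a linear program, and dualizing only the box constraints $\mu\le\nu$ (equivalently, applying Sion's minimax theorem and keeping $\mathcal{M}_\phi$ as the inner convex set) identifies the minimal deficit as
\[
\nu(U)-\max\{\mu(U):\mu\le\nu,\ \mu\le\phi\}=\max_{z\in[0,1]^U}\bigl(\langle\nu,z\rangle-\hat\phi(z)\bigr).
\]
Thus the lemma amounts to the estimate $\langle\nu,z\rangle\le\hat\phi(z)+M$ for all $z\in[0,1]^U$. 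The hypothesis enters through $\nu(A)\le\psi(A)\le\phi(A)+M$, and the crux is to convert this \emph{additive} comparison of the submeasures into a comparison of the functionals $\hat\phi,\hat\psi$ while losing no more than $M$; the naive layer-cake bound only controls $\langle\nu,z\rangle$ by the Choquet integral $\int_0^1\phi(\{z>t\})\,dt$, which can strictly exceed $\hat\phi(z)$, so this route is genuinely lossy.

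This is precisely where non-pathologicality of $\phi$ is indispensable. For a \emph{pathological} $\phi$ the value $\phi(A)$ may exceed $\sup_{\mu\le\phi}\mu(A)$, and then the deficit really does exceed $M$: on $U=\{1,2,3\}$ with $\phi$ equal to $M$ on singletons and pairs and to $2M$ on $U$, and $\nu$ the measure with all masses equal to $M$, one has $\nu\le\phi+M$ yet every admissible $\mu$ satisfies $\mu(U)\le 3M/2$, forcing removal of $3M/2>M$. For non-pathological $\phi$ one has instead $\phi(A)=\max_{\mu\le\phi}\mu(A)$ with the maximum attained, and I would exploit this by taking a minimal cut $A^\ast$ realizing $\max_A(\nu(A)-\phi(A))\le M$, choosing a measure attaining $\phi(A^\ast)$, and assembling $\mu$ so as to keep $\rho=\nu-\mu$ of total mass $\max_A(\nu(A)-\phi(A))$. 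Checking that the assembled $\mu$ respects \emph{all} the constraints $\mu(A)\le\phi(A)$, and not merely the one at $A^\ast$, is the delicate verification on which the whole proof turns.
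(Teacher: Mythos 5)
Your reductions are all sound: the proposition does follow from the comparison $\hat\psi(y)\le\hat\phi(y)+M\|y\|_\infty$, that comparison does reduce to finitely supported $y$, and (via LP duality/minimax, which you invoke correctly) it is equivalent to your pushing--down lemma, i.e.\ to the statement that every measure $\nu\le\psi$ splits as $\nu=\mu+\rho$ with $\mu\in\mathcal{M}_\phi$, $\mu\le\nu$ and $\rho(U)\le M$. The problem is that this lemma is never proved: your last paragraph sketches a construction (a minimal cut $A^\ast$, a measure attaining $\phi(A^\ast)$, an ``assembled'' $\mu$) and then explicitly defers the verification that the assembled $\mu$ satisfies \emph{all} constraints $\mu(A)\le\phi(A)$. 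That verification is not a routine check; it is the entire mathematical content of the proposition. Since your duality step shows the lemma is \emph{equivalent} to the inequality you want, the proposal as written is a chain of faithful reformulations that ends exactly where the difficulty begins. Concretely, the hypothesis $|\phi-\psi|\le M$ together with non-pathologicality gives $\langle\nu,z\rangle\le\hat\phi(z)+M$ only at indicator vectors $z=\chi_C$ (where $\hat\phi(\chi_C)=\phi(C)$); the function $z\mapsto\langle\nu,z\rangle-\hat\phi(z)$ is concave, so its maximum over $[0,1]^U$ need not sit at a vertex, and the vertex bound does not propagate inward --- this is precisely the Choquet-versus-$\hat\phi$ gap you yourself identify (and which is real: there are non-pathological $\phi$, e.g.\ $\phi$ on $\{1,2,3\}$ with $\phi(\{1,2\})=\phi(\{1,3\})=1$, $\phi(\{2,3\})=\phi(\{1,2,3\})=2$, all singletons $1$, for which the Choquet integral strictly exceeds $\hat\phi$). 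Nothing in the proposal bridges that gap, so the proof is genuinely incomplete.

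It is also worth knowing that the paper's own argument shows this machinery is unnecessary. The paper argues by contradiction: if $x\in\BV(\psi)\setminus\BV(\phi)$ is bounded by $N$ with $\psi$-variation at most $M_1$, pick (by lower semicontinuity) a measure $\mu\le\phi$, a family $J\in\mathcal{P}_I$ and $k\in\N$ with $\sum_{n\le k}\mu(\{n\})|x(J_n)|>2NM+2N+M_1$, and pick a measure $\nu\le\psi$ with $\nu(\{1,\dots,k\})=\psi(\{1,\dots,k\})$ --- this single use of non-pathologicality (of $\psi$, on one finite set) replaces your pushing--down lemma entirely. Subtracting the bound $\sum_{n\le k}\nu(\{n\})|x(J_n)|\le M_1$ and using $|x(J_n)|\le 2N$, one extracts $\sum_{n\le k}\bigl(\mu(\{n\})-\nu(\{n\})\bigr)\ge M+1$, whence $\phi(\{1,\dots,k\})-\psi(\{1,\dots,k\})>M$, a contradiction. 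The key structural difference is that the paper never attempts a coordinatewise domination $\mu\le\nu$ (the hard, possibly false-in-general step you are stuck on); it only compares the two measures' totals against the same finite weight vector $\bigl(|x(J_n)|\bigr)_{n\le k}$, which is where the hypothesis $|\phi-\psi|\le M$ can be applied directly.
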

\begin{proof}
Suppose to the contrary that there exists $x\in\BV(\psi)\setminus \BV(\phi)$. Then $x$ is bounded by some $N> 0$ and there exists $M_1>0$ such that for every $J\in \cP_I$ we have $\hat\psi(x(J))\leq M_1$.

On the other hand, there exists a measure $\mu\leq \phi$, $J\in \cP_I$ and $k\in\N$ such that $\sum_{n\leq k} \mu(\{n\}) |x(J_n)|> 2NM+2N+M_1$.
Meanwhile, there exists a measure $\nu\leq\psi$ such that $\sum_{n\leq k}\nu(\{n\})=\psi(\{1,\ldots,k\})$. Clearly,  we have $\sum_{n\leq k} \nu(\{n\}) |x(J_n)|\leq M_1$. Therefore, we obtain
$$\sum_{n\leq k} (\mu(\{n\})-\nu(\{n\})) |x(J_n)|> 2NM+2N +M_1-M_1= 2N(M+1). $$
Since $|x(J_n)|\leq 2N$ for every $n$, it follows that 
$$\sum_{n\leq k} (\mu(\{n\})-\nu(\{n\})) \geq  \frac{\sum_{n\leq k} (\mu(\{n\})-\nu(\{n\})) |x(J_n)|}{2N}\geq M+1. $$
Therefore, 
$$\phi(\{1,\ldots,k\}) -\psi(\{1,\ldots,k\}) \geq \sum_{n\leq k} (\mu(\{n\})-\nu(\{n\}))\geq M+1>M,$$
a contradiction.
\end{proof}

\begin{corollary}
For every non-pathological submeasure $\phi$ there exists a non-pathological submeasure $\psi$ such that $\BV(\phi)=\BV(\psi)$ and the sequence $(\psi(\{n\}))$ does not tend to zero.
\end{corollary}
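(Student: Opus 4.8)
The plan is to realize $\psi$ as a bounded additive perturbation of $\phi$, chosen so that every singleton acquires a uniform positive mass. Concretely, fix a constant $c>0$ (say $c=1$) and set
\[
\theta(C)=\begin{cases} c, & C\neq\emptyset,\\ 0, & C=\emptyset,\end{cases}
\]
and then define $\psi(C)=\phi(C)+\theta(C)$ for all $C\subseteq\N$. This is exactly the kind of perturbation to which Proposition \ref{prop:phi=psi} applies, since $|\psi(C)-\phi(C)|=\theta(C)\le c$ for every $C$. Note that no case distinction on the value of $\phi(\N)$ is needed: this single construction will work for an arbitrary $\phi$.

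First I would check that $\psi$ is an lsc submeasure. Monotonicity, subadditivity and $\psi(\emptyset)=0$ pass to termwise sums of submeasures, and $\psi(\{n\})=\phi(\{n\})+c<\infty$; lower semicontinuity of $\psi$ follows from that of $\phi$ together with the obvious identity $\lim_n\theta(C\cap\{1,\dots,n\})=\theta(C)$. The atom condition is then immediate: $\psi(\{n\})=\phi(\{n\})+c\ge c>0$ for every $n$, so $(\psi(\{n\}))$ does not tend to $0$ (indeed its $\liminf$ is at least $c$).

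The main point -- and the step I expect to require the most care -- is verifying that $\psi$ is non-pathological, since non-pathology is not obviously preserved under adding submeasures. I would argue directly by producing, for each fixed $C\subseteq\N$, a witnessing measure. Using that $\phi$ is non-pathological, choose a measure $\mu\le\phi$ with $\mu(C)$ as close to $\phi(C)$ as desired, pick any $n_0\in C$ (when $C\neq\emptyset$), and consider $\lambda=\mu+c\,\delta_{n_0}$, where $\delta_{n_0}$ is the Dirac measure at $n_0$. The inequality $\lambda\le\psi$ holds on every $D\subseteq\N$ because $\mu(D)\le\phi(D)$ and $c\,\delta_{n_0}(D)\le\theta(D)$ (as $n_0\in D$ forces $D\neq\emptyset$), while $\lambda(C)=\mu(C)+c$ approximates $\phi(C)+c=\psi(C)$; the case $C=\emptyset$ is trivial. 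This shows $\psi(C)=\sup\{\lambda(C):\lambda\in\mathcal{M}_\psi\}$, so $\psi$ is non-pathological.

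Finally, since $\phi$ and $\psi$ are non-pathological lsc submeasures with $|\phi(C)-\psi(C)|\le c$ for all $C\subseteq\N$, Proposition \ref{prop:phi=psi} yields $\BV(\phi)=\BV(\psi)$, which together with the atom bound from the second paragraph completes the argument.
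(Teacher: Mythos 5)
Your proof is correct and takes essentially the same route as the paper: both arguments perturb $\phi$ by the $\{0,1\}$-valued unit submeasure so that every singleton gets mass at least $1$, observe that the perturbation changes $\phi$ by at most a bounded constant, and invoke Proposition~\ref{prop:phi=psi} to conclude $\BV(\phi)=\BV(\psi)$. The only (cosmetic) difference is that the paper combines the two submeasures by taking $\psi=\max\{\phi,\delta\}$ and asserts non-pathology of the maximum in one line, whereas you take the sum $\phi+\theta$ and verify non-pathology explicitly via the witnessing measure $\mu+c\,\delta_{n_0}$; both verifications are valid.
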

\begin{proof}
Define the function $\delta:\cP(\N)\to[0,\infty]$ by
$$\delta(A)=\begin{cases}
    0, & \text{if }A=\emptyset,\\
    1, & \text{if }A\not=\emptyset.
\end{cases}
$$
Clearly, $\delta$ is a non-pathological submeasure. Define the function $\psi:\cP(\N)\to[0,\infty]$ by $\psi(A)=\max\{\phi(A),\delta(A)\}$ for every $A\subseteq\N$. Then $\psi$ is  non-pathological submeasure as a maximum of two non-pathological submeasures. It is also clear that $(\psi(\{n\}))$ does not tend to zero because $\psi(\{n\})\geq \delta(\{n\})=1$ for every $n\in\N$. Moreover, since $|\phi(A)-\psi(A)|\leq 1$ for every $A\subseteq\N$, by Proposition~\ref{prop:phi=psi} we obtain $\BV(\phi)=\BV(\psi)$.
\end{proof}

\subsection{Basic results}

\begin{proposition}
\label{prop-basic-1}
For any non-pathological lsc submeasure $\phi$ the following are equivalent :
\begin{itemize}
    \item[(a)] $\phi(\N)<\infty$;
    \item[(b)] $\Fin(\phi)=\mathcal{P}(\N)$;
    \item[(c)] $\FIN(\phi)\supseteq\ell_\infty$;
    \item[(d)] $\mFIN(\phi)=\Mon$;
    \item[(e)] $\mFIN(\phi)\not\subseteq c_0$;
    \item[(f)] every bounded function $x:[0,1]\to\mathbb{R}$ belongs to $\BV(\phi)$.
\end{itemize}
\end{proposition}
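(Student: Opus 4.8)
The plan is to show that each of (b)--(f) is equivalent to (a), leaning repeatedly on the non-pathology identity that for the constant sequence $\mathbf 1=(1,1,1,\ldots)$ one has
$\hat\phi(\mathbf 1)=\sup_{\mu\in\mathcal M_\phi}\sum_{n\in\N}\mu(\{n\})=\sup_{\mu\in\mathcal M_\phi}\mu(\N)=\phi(\N)$,
together with the one-sided estimate $\hat\phi(x)\le\|x\|_\infty\,\phi(\N)$ valid for every $x\in\ell_\infty$ (immediate from $|x_n|\le\|x\|_\infty$ inside each sum defining $\hat\phi$). The equivalence (a)$\Leftrightarrow$(b) I would dispatch first: by monotonicity of a submeasure $\phi(C)\le\phi(\N)$ for all $C\subseteq\N$, so $\phi(\N)<\infty$ forces $\Fin(\phi)=\mathcal{P}(\N)$, while conversely $\Fin(\phi)=\mathcal{P}(\N)$ contains $\N$, i.e.\ $\phi(\N)<\infty$.

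For (a)$\Leftrightarrow$(c): if $\phi(\N)<\infty$, the estimate above gives $\hat\phi(x)\le\|x\|_\infty\,\phi(\N)<\infty$ for every $x\in\ell_\infty$, so $\ell_\infty\subseteq\FIN(\phi)$; conversely, if $\ell_\infty\subseteq\FIN(\phi)$ then in particular $\mathbf 1\in\FIN(\phi)$, whence $\phi(\N)=\hat\phi(\mathbf 1)<\infty$ by the displayed identity.

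For the block (a)$\Rightarrow$(d)$\Rightarrow$(e)$\Rightarrow$(a): since every sequence in $\Mon$ is bounded by its first term, $\Mon\subseteq\ell_\infty$, so (a) together with the estimate yields $\Mon\subseteq\FIN(\phi)$, that is $\mFIN(\phi)=\FIN(\phi)\cap\Mon=\Mon$, which is (d). Assuming (d), the constant sequence $\mathbf 1$ lies in $\Mon=\mFIN(\phi)$ but not in $c_0$, giving (e). Finally, assuming (e), I would pick $x\in\mFIN(\phi)\setminus c_0$; since $x\in\Mon$ the sequence $(|x_n|)$ is nonincreasing and, failing to be null, satisfies $|x_n|\ge L$ for some $L>0$ and all $n$, so $\hat\phi(x)\ge\sup_{\mu}\sum_n\mu(\{n\})L=L\,\phi(\N)$, and finiteness of $\hat\phi(x)$ forces $\phi(\N)<\infty$, which is (a).

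It remains to treat (a)$\Leftrightarrow$(f), the step I expect to require the most care. The forward direction is essentially the earlier remark that $\phi(\N)<\infty$ implies $\BV(\phi)=B(I)$: for bounded $x$ one has $|x(J_k)|\le 2\|x\|_\infty$, hence $\hat\phi(x(J))\le 2\|x\|_\infty\,\phi(\N)$ uniformly over $J\in\mathcal{P}_I$, so $x\in\BV(\phi)$. The converse is where I would argue by contraposition through the structural result proved earlier: if $\phi(\N)=+\infty$, then $\BV(\phi)$ consists only of bounded regulated functions, so any bounded function that fails to be regulated---for instance $x(t)=\sin(1/t)$ for $t\in(0,1]$ and $x(0)=0$, which has no one-sided limit at $0$---witnesses $B(I)\not\subseteq\BV(\phi)$ and contradicts (f). Hence (f) implies (a). The only genuine subtlety throughout is this last implication, which is precisely the point where one must invoke the regulated-function proposition rather than a purely sequence-space computation.
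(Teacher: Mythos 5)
Your proof is correct. Up to logical reorganization (you use (a) as a hub, while the paper runs the single cycle (a)$\Rightarrow$(c)$\Rightarrow$(d)$\Rightarrow$(e)$\Rightarrow$(b)$\Rightarrow$(f)$\Rightarrow$(a)), most of your steps are the same computations as the paper's: the estimate $\hat\phi(x)\le\|x\|_\infty\,\phi(\N)$, the constant sequence $\mathbf 1$ with $\hat\phi(\mathbf 1)=\phi(\N)$, and, for getting out of (e), the observation that a nonincreasing sequence not tending to zero is bounded below by some $L>0$, whence $\phi(\N)\le\frac{1}{L}\hat\phi(x)<\infty$. The one genuinely different step is (f)$\Rightarrow$(a). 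The paper argues directly: the Dirichlet function $x_D=\chi_{\mathbb{Q}\cap[0,1]}$ is bounded, and choosing nonoverlapping intervals $J_n$ each with one rational and one irrational endpoint makes $x_D(J)$ constantly equal to $1$, so membership of $x_D$ in $\BV(\phi)$ forces $\phi(\N)=\hat\phi(x_D(J))<\infty$. You instead argue by contraposition through the earlier proposition that $\phi(\N)=+\infty$ implies $\BV(\phi)$ contains only bounded regulated functions, exhibiting $\sin(1/t)$ as a bounded non-regulated witness. This is legitimate: that proposition precedes the present one in the paper and its proof does not depend on it, so there is no circularity. Your route buys economy by reusing established structure; the paper's Dirichlet argument buys self-containedness, and is really the same idea one level down, since the proof of the regulated-function proposition is itself a ``sequence of intervals with increments bounded below by $\delta$'' computation, which the Dirichlet construction carries out directly with $\delta=1$. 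In both cases the quantitative core is identical: a sequence of nonoverlapping intervals on which the increments of a bounded function stay bounded away from zero forces $\phi(\N)$ to be finite.
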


\begin{proof}
(a)$\implies$(c): If $x\in\ell_\infty$, then there is $M>0$ such that $|x_n|\leq M$ for all $n\in\N$. Hence, if $\hat{1}\in\mathbb{R}^\N$ is the infinite sequence constantly equal to $1$, then
\[
\hat{\phi}(x)\leq\hat{\phi}(M\cdot \hat{1})=M\hat{\phi}(\hat{1})=M\phi(\N)<\infty.\]
Thus $x\in\FIN(\phi)$. 

(c)$\implies$(d): If $\FIN(\phi)\supseteq\ell_\infty$, then 
\[
\Mon\supseteq\mFIN(\phi)=\FIN(\phi)\cap \Mon\supseteq\ell_\infty\cap \Mon=\Mon.
\]

(d)$\implies$(e): The sequence constantly equal to one belongs to $\Mon=\mFIN(\phi)$, but not to $c_0$.

(e)$\implies$(b): Since $\mFIN(\phi)\not\subseteq c_0$, there is some $x\in\mFIN(\phi)\setminus c_0$. Since $\mFIN(\phi)\subseteq\Mon$, $|x|$ converges to some $l>0$ and $|x_n|\geq l$ for all $n\in\N$. Thus, if $\hat{1}$ denotes the infinite sequence constantly equal to $1$, then $\phi(\N)=\hat{\phi}(\hat{1})=\frac{1}{l}\hat{\phi}(l\cdot\hat{1})\leq\frac{1}{l}\hat{\phi}(x)<\infty$, so $\N\in\Fin(\phi)$ and consequently $\Fin(\phi)=\mathcal{P}(\N)$.

(b)$\implies$(f): If $x:[0,1]\to\mathbb{R}$ is bounded by some $M>0$, then $|x(J)|$ is bounded by $2M$ for every interval $J\subseteq I$. Hence, since $\N\in\Fin(\phi)$, we have $\sup_{J\in\mathcal{P}_I}\hat{\phi}(|x(J)|)\leq 2 M\phi(\N)<\infty$.

(f)$\implies$(a): Consider the Dirichlet function $x_D:[0,1]\to\mathbb{R}$ given by:
\[
x_D(t)=\begin{cases}
1, & \text{if }t\in\mathbb{Q}\cap[0,1];\\
0, & \text{otherwise.}
\end{cases}
\]
Then $x_D$ is bounded, so it belongs to $\BV(\phi)$. Note also that there is $J\in\mathcal{P}_I$ such that $x_D(J)$ is constantly equal to $1$. Thus, $\phi(\N)=\hat{\phi}(x_D(J))<\infty$.
\end{proof}

\begin{proposition}\label{prop: mexh-basic}
The following are equivalent for any non-pathological lsc submeasure $\phi$:
\begin{itemize}
    \item[(a)] $\lim_n\phi(\{n,n+1,\ldots\})=0$;
    \item[(b)] $\Exh(\phi)=\mathcal{P}(\N)$;
    \item[(c)] $\EXH(\phi)\supseteq\ell_\infty$;
    \item[(d)] $\mEXH(\phi)=\Mon$;
    \item[(e)] $\mEXH(\phi)\not\subseteq c_0$.
\end{itemize}
\end{proposition}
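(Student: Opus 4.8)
The plan is to prove the cyclic chain of implications $(a)\Rightarrow(c)\Rightarrow(d)\Rightarrow(e)\Rightarrow(b)\Rightarrow(a)$, following the template of the proof of Proposition~\ref{prop-basic-1}. The engine of the whole argument will be a pair of two-sided estimates on $\hat\phi$ restricted to a set $A\subseteq\N$, coming directly from non-pathology. Writing $\hat\phi(x\cdot\chi_A)=\sup_{\mu\in\mathcal{M}_\phi}\sum_{k\in A}\mu(\{k\})|x_k|$ and using the identity $\phi(A)=\sup_{\mu\in\mathcal{M}_\phi}\mu(A)$ valid for non-pathological $\phi$, one sees that whenever $l\leq|x_k|\leq M$ for all $k\in A$, then
\[
l\,\phi(A)\leq\hat\phi(x\cdot\chi_A)\leq M\,\phi(A).
\]
Applied to the tails $A=\{n,n+1,\ldots\}$, this is precisely what converts the submeasure condition (a) into statements about the sequence norm $\hat\phi$, and back.

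For $(a)\Rightarrow(c)$ I would take $x\in\ell_\infty$ with $|x_k|\leq M$; the upper estimate gives $\hat\phi(x\cdot\chi_{\{n,n+1,\ldots\}})\leq M\phi(\{n,n+1,\ldots\})\to 0$ by (a), so $x\in\EXH(\phi)$. The implication $(c)\Rightarrow(d)$ is the purely set-theoretic chain $\Mon\supseteq\mEXH(\phi)=\EXH(\phi)\cap\Mon\supseteq\ell_\infty\cap\Mon=\Mon$, using that a sequence nonincreasing in modulus is bounded by its first term, hence lies in $\ell_\infty$. For $(d)\Rightarrow(e)$ it suffices to note that the sequence constantly equal to $1$ lies in $\Mon=\mEXH(\phi)$ but not in $c_0$.

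The crux is $(e)\Rightarrow(b)$. I would pick $x\in\mEXH(\phi)\setminus c_0$; monotonicity of $(|x_k|)$ forces $|x_k|\to l$ for some $l>0$ with $|x_k|\geq l$ for all $k$, and then the lower estimate yields $\phi(\{n,n+1,\ldots\})\leq\frac{1}{l}\hat\phi(x\cdot\chi_{\{n,n+1,\ldots\}})\to 0$, because $x\in\EXH(\phi)$. This shows $\N\in\Exh(\phi)$, and since $\Exh(\phi)$ is downward closed this means $\Exh(\phi)=\mathcal{P}(\N)$, i.e.\ (b). Finally $(b)\Rightarrow(a)$ will be immediate: $\N\in\Exh(\phi)$ unravels by definition to $\lim_n\phi(\N\setminus\{1,\ldots,n\})=0$, which is (a) up to a shift of index.

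I do not expect a serious obstacle once the two non-pathology estimates are isolated, since every implication is then short. The one place requiring a little care is extracting the uniform lower bound $|x_k|\geq l>0$ in $(e)\Rightarrow(b)$: this is exactly where membership in $\Mon$ and the failure of $x\in c_0$ must be combined, and it explains why the statement is phrased in terms of $\Mon$ rather than arbitrary sequences.
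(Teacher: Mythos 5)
Your proposal is correct and takes essentially the same route as the paper's proof: the identical cyclic chain (a)$\implies$(c)$\implies$(d)$\implies$(e)$\implies$(b)$\implies$(a), with the same witnesses (the constant-one sequence for (d)$\implies$(e), and the limit $l>0$ of a monotone non-null sequence for (e)$\implies$(b)). The only cosmetic difference is that you isolate the two-sided estimate $l\,\phi(A)\leq\hat\phi(x\cdot\chi_A)\leq M\,\phi(A)$ as an explicit lemma, whereas the paper obtains the same bounds inline from monotonicity and homogeneity of $\hat\phi$ together with $\hat\phi(\hat{1}\cdot\chi_A)=\phi(A)$.
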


\begin{proof}
(a)$\implies$(c): If $x\in\ell_\infty$, then there is $M>0$ such that $|x_n|\leq M$ for all $n\in\N$. Hence, if $\hat{1}\in\mathbb{R}^\N$ is the infinite sequence constantly equal to $1$, then
\[
\hat{\phi}(x\cdot\chi_{\{n,n+1,\ldots\}})\leq\hat{\phi}(M\cdot \hat{1}\cdot\chi_{\{n,n+1,\ldots\}})=M\phi(\{n,n+1,\ldots\})\to 0.
\]
Thus $x\in\EXH(\phi)$. 

(c)$\implies$(d): If $\EXH(\phi)\supseteq\ell_\infty$, then 
\[
\Mon\supseteq\mEXH(\phi)=\EXH(\phi)\cap \Mon\supseteq\ell_\infty\cap \Mon=\Mon.
\]

(d)$\implies$(e): The sequence constantly equal to one is in $\Mon=\mEXH(\phi)$, but not in $c_0$.

(e)$\implies$(b): Since $\mEXH(\phi)\not\subseteq c_0$, there is some $x\in\mEXH(\phi)\setminus c_0$. Since $\mEXH(\phi)\subseteq\Mon$, $x$ converges to some $l>0$ and $x_n\geq l$ for all $n\in\N$. Thus, if $\hat{1}$ denotes the infinite sequence constantly equal to $1$, then $\phi(\N\setminus\{1,2,\ldots,n-1\})=\phi(\{n,n+1,\ldots\})=\hat{\phi}(\hat{1}\cdot\chi_{\{n,n+1,\ldots\}})=\frac{1}{l}\hat{\phi}(l\cdot\hat{1}\cdot\chi_{\{n,n+1,\ldots\}})\leq\frac{1}{l}\hat{\phi}(x\cdot\chi_{\{n,n+1,\ldots\}})\to 0$, so $\N\in\Exh(\phi)$ and consequently $\Exh(\phi)=\mathcal{P}(\N)$.

(b)$\implies$(a): Since $\Exh(\phi)=\mathcal{P}(\N)$, $\N\in\Exh(\phi)$, which means that $\lim_n\phi(\{n,n+1,\ldots\})=0$.
\end{proof}

\section{Inclusions in the general case}

\subsection{Two orders}

Actually, $\hat\phi$ is a function defined on infinite sequences of reals. However, for simplicity, we will sometimes write $\hat\phi(x)$ even for finite sequences $x$ -- in such cases we mean $\hat\phi(x^\frown 0)$, where $x^\frown 0$ is the infnite sequence starting with $x$ and followed by zeros.

\begin{definition}
Let $\phi_1$ and $\phi_2$ be two non-pathological lsc submeasures. We write:
\begin{itemize}
    \item $\phi_2\preceq\phi_1$ if there is $M>0$ such that $\hat\phi_2(x)\leq M\hat\phi_1(x)$ for every finite sequence $x\in\bigcup_{n\in\N}\R^n$;
    \item $\phi_2\preceq_m\phi_1$ if there is $M>0$ such that $\hat\phi_2(x)\leq M\hat\phi_1(x)$ for every non-increasing finite sequence $x\in\bigcup_{n\in\N}\R^n$.
\end{itemize}
\end{definition}

\subsection{Ideals}

\begin{proposition}
\label{general-ideals}
Let $\phi_1$ and $\phi_2$ be two non-pathological lsc submeasures.
\begin{itemize}
    \item[(a)] If $\phi_2\preceq\phi_1$ then $\Exh(\phi_1)\subseteq\Exh(\phi_2)$ and $\Fin(\phi_1)\subseteq\Fin(\phi_2)$
    \item[(b)] If either $\Exh(\phi_1)\subseteq\Exh(\phi_2)$ or $\Fin(\phi_1)\subseteq\Fin(\phi_2)$, then $\Exh(\phi_1)\subseteq\Fin(\phi_2)$.
    \item[(c)] $\Exh(\phi_1)\subseteq\Fin(\phi_2)$ if and only if
    \[
    \exists_{M>0}\ \forall_{F\in\Fin}\ \phi_1(F)>\frac{1}{M}\text{ or }\phi_2(F)<M.
    \]
\end{itemize}
\end{proposition}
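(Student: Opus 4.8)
The plan is to reduce everything to the behaviour of $\hat\phi_1$ and $\hat\phi_2$ on characteristic functions, exploiting the identity $\hat\phi_i(\chi_C)=\phi_i(C)$, which is immediate from non-pathologicality (for $x=\chi_C$ the inner sum $\sum_n\mu(\{n\})|x_n|$ equals $\mu(C)$), together with Remark \ref{rem1}. For (a) I would first upgrade the defining inequality of $\phi_2\preceq\phi_1$ from finite sequences to arbitrary $x\in\R^\N$: since $\hat\phi_i(x)=\lim_n\hat\phi_i(x\cdot\chi_{\{1,\ldots,n\}})$, passing to the limit in $\hat\phi_2(x\cdot\chi_{\{1,\ldots,n\}})\le M\hat\phi_1(x\cdot\chi_{\{1,\ldots,n\}})$ gives $\hat\phi_2(x)\le M\hat\phi_1(x)$ for every $x$. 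Applying this to $x=\chi_C$ yields $\phi_2(C)\le M\phi_1(C)$, hence $\Fin(\phi_1)\subseteq\Fin(\phi_2)$; applying it to $x=\chi_{C\setminus\{1,\ldots,n\}}$ and letting $n\to\infty$ yields $\Exh(\phi_1)\subseteq\Exh(\phi_2)$. Part (b) is then immediate from the general inclusion $\Exh(\phi)\subseteq\Fin(\phi)$ noted in the excerpt: in the first case $\Exh(\phi_1)\subseteq\Exh(\phi_2)\subseteq\Fin(\phi_2)$, and in the second $\Exh(\phi_1)\subseteq\Fin(\phi_1)\subseteq\Fin(\phi_2)$.

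Part (c) is the heart of the statement. I would first rewrite the right-hand condition in contrapositive form: there is $M>0$ such that every finite $F$ with $\phi_1(F)\le 1/M$ satisfies $\phi_2(F)<M$. For the direction $(\Leftarrow)$, given such $M$ and any $C\in\Exh(\phi_1)$, I pick $n_0$ with $\phi_1(C\setminus\{1,\ldots,n_0\})\le 1/M$. By monotonicity of the submeasure, every finite $F\subseteq C\setminus\{1,\ldots,n_0\}$ then has $\phi_1(F)\le 1/M$, hence $\phi_2(F)<M$; lower semicontinuity of $\phi_2$ upgrades this to $\phi_2(C\setminus\{1,\ldots,n_0\})\le M$, and adding the initial piece $C\cap\{1,\ldots,n_0\}$, on which $\phi_2$ is finite since each singleton is, gives $\phi_2(C)<\infty$, i.e. $C\in\Fin(\phi_2)$.

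For the direction $(\Rightarrow)$ I argue by contraposition: assuming the condition fails, for every $M$ there is a finite $F$ with $\phi_1(F)\le 1/M$ and $\phi_2(F)\ge M$. I construct blocks escaping to infinity inductively: with $N_m\zdef\max(F_1\cup\cdots\cup F_{m-1})$ (and $N_1=0$), I apply the failure with $M=2^m+\sum_{k\le N_m}\phi_2(\{k\})$ to obtain a finite $F$ with $\phi_1(F)\le 2^{-m}$ and $\phi_2(F)\ge M$, and set $F_m\zdef F\setminus\{1,\ldots,N_m\}$. Subadditivity gives $\phi_2(F_m)\ge M-\phi_2(\{1,\ldots,N_m\})\ge 2^m$, while $\phi_1(F_m)\le 2^{-m}$ and $\min F_m>N_m$. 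Putting $C=\bigcup_m F_m$, the blocks are pairwise disjoint and escape to infinity, so for $n\ge N_m$ one has $C\setminus\{1,\ldots,n\}\subseteq\bigcup_{j\ge m}F_j$, and countable subadditivity obtained through lower semicontinuity yields $\phi_1(C\setminus\{1,\ldots,n\})\le\sum_{j\ge m}2^{-j}\to 0$, so $C\in\Exh(\phi_1)$; meanwhile $\phi_2(C)\ge\phi_2(F_m)\ge 2^m$ for all $m$, so $C\notin\Fin(\phi_2)$, contradicting the assumed inclusion.

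The main obstacle is exactly this construction in $(\Rightarrow)$: the raw hypothesis only guarantees finite sets with small $\phi_1$-measure and large $\phi_2$-measure located \emph{somewhere} in $\N$, whereas membership in $\Exh(\phi_1)$ forces the relevant mass to sit arbitrarily far out. The key device is to subtract an initial segment and absorb the lost $\phi_2$-mass into an enlarged threshold $M$ (legitimate precisely because $\phi_2$ is finite on finite sets), which relocates each block beyond $N_m$; the final tail estimate for $\phi_1$ then rests on countable subadditivity, which I extract from lower semicontinuity by intersecting with initial segments $\{1,\ldots,k\}$ and letting $k\to\infty$.
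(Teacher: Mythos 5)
Your proof is correct, and parts (a), (b) and the ($\Leftarrow$) direction of (c) follow the paper's argument essentially verbatim: (a) via the identity $\hat\phi_i(\chi_C)=\phi_i(C)$ and the limit formula for $\hat\phi_i$, (b) from $\Exh(\phi)\subseteq\Fin(\phi)$, and ($\Leftarrow$) by cutting off an initial segment, bounding finite subsets of the tail, and invoking lower semicontinuity of $\phi_2$.

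The one place you diverge is the ($\Rightarrow$) direction of (c), and there your extra machinery is sound but unnecessary. What you call the main obstacle --- that the witnesses $F$ ``sit somewhere in $\N$'' while $\Exh(\phi_1)$ needs mass far out --- dissolves on closer inspection, and the paper exploits this: it simply takes the raw witnesses $F_n$ with $\phi_1(F_n)\leq 2^{-n}$ and $\phi_2(F_n)\geq 2^n$ and sets $C=\bigcup_n F_n$, with no relocation. To see $C\in\Exh(\phi_1)$, fix $\varepsilon>0$, choose $n_0$ with $2^{-n_0}<\varepsilon$, and note that the \emph{finite} set $F_1\cup\dots\cup F_{n_0}$ lies in some initial segment $\{1,\dots,k_0\}$; hence for $k>k_0$ the tail $C\setminus\{1,\dots,k\}$ is covered by $\bigcup_{i>n_0}F_i$, whose $\phi_1$-mass is at most $\sum_{i>n_0}2^{-i}=2^{-n_0}$ by the same lsc-derived countable subadditivity you use. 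The point is that $\Exh(\phi_1)$ only requires, for each $\varepsilon$, that \emph{some} tail be small --- it does not require the witnessing sets themselves to escape to infinity, since the finitely many early ones are automatically swallowed by a long enough initial segment. Your device (subtracting $\{1,\dots,N_m\}$ and inflating the threshold to $M=2^m+\sum_{k\leq N_m}\phi_2(\{k\})$ so that subadditivity of $\phi_2$ restores the lower bound $2^m$) is a correct way to force pairwise disjoint blocks marching to infinity, but nothing in the proof actually needs that extra structure; it only lengthens the argument.
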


\begin{proof}
(a): This is clear with the use of Remark \ref{rem1}.

(b): This follows from the inclusions $\Exh(\phi_1)\subseteq\Fin(\phi_1)$ and $\Exh(\phi_2)\subseteq\Fin(\phi_2)$ (see Proposition \ref{FBN}).

(c): Suppose first that there is $M>0$ such that for all $F\in\Fin$ either $\phi_1(F)>\frac{1}{M}$ or $\phi_2(F)<M$. Let $C\in\Exh(\phi_1)$. Then there is $k\in\N$ such that $\phi_1(C\setminus\{1,2,\ldots,k\})<\frac{1}{M}$. Hence, if $F\subseteq C\setminus\{1,2,\ldots,k\}$ is finite, then $\phi_1(F)<\frac{1}{M}$ and $\phi_2(F)<M$ (by our assumption). Thus, since $\phi_2$ is lsc, we have:
\begin{align*}
\phi_2(C) & \leq\phi_2(C\cap\{1,2,\ldots,k\})+\phi_2(C\setminus\{1,2,\ldots,k\})\cr 
& \leq \left(\sum_{i\leq k}\phi_2(\{i\})\right)+M<\infty.
\end{align*}

Suppose now that for each $n\in\N$ there is $F_n\in\Fin$ such that $\phi_1(F_n)\leq\frac{1}{2^n}$ and $\phi_2(F_n)\geq 2^n$. Then clearly $C=\bigcup_{n\in\N}F_n\notin\Fin(\phi_2)$. On the other hand, we will show that $C\in\Exh(\phi_1)$. Given any $\varepsilon>0$ there is $n_0\in\N$ such that $\frac{1}{2^{n_0}}<\varepsilon$. Find $k_0\in\N$ such that $F_1\cup\ldots\cup F_{n_0}\subseteq[1,k_0]$ and observe that for each $k>k_0$ we have:
\begin{align*}
\phi_1(C\setminus\{1,2,\ldots,k\}) & \leq\phi_1(C\setminus\{1,2,\ldots,k_0\})\cr 
& \leq \sum_{i=n_0+1}^\infty \phi_1(F_i)\leq\sum_{i=n_0+1}^\infty \frac{1}{2^i}=\frac{1}{2^{n_0}}<\varepsilon.
\end{align*}
\end{proof}

\begin{remark}
Observe that there are non-pathological lsc submeasures $\phi_1$ and $\phi_2$ such that $\Exh(\phi_1)\subseteq\Exh(\phi_2)$, but $\Fin(\phi_1)\not\subseteq\Fin(\phi_2)$. Indeed, this is true for 
\[
\phi_1(C)=\begin{cases}
    1, & \text{if }C\neq\emptyset,\\
    0, & \text{if }C=\emptyset,
\end{cases}
\]
and $\phi_2(C)=\sum_{i\in C}\frac{1}{i}$, since in this case $\Exh(\phi_1)=\Fin\subseteq\I_{1/n}=\Exh(\phi_2)$ and $\Fin(\phi_1)=\mathcal{P}(\N)\not\subseteq\I_{1/n}=\Fin(\phi_2)$. 

On the other hand, there are also non-pathological lsc submeasures $\psi_1$ and $\psi_2$ such that $\Exh(\psi_1)\not\subseteq\Exh(\psi_2)$, but $\Fin(\psi_1)\subseteq\Fin(\psi_2)$. This is the case for $\psi_1=\phi_2$ and $\psi_2=\phi_1$ (where $\phi_1$ and $\phi_2$ are as in the previous paragraph).
\end{remark}

\subsection{Spaces of real sequences}

\begin{lemma}
\label{lem-L}
Let $\phi_1$ and $\phi_2$ be two non-pathological lsc submeasures and assume that $\phi_1(\{i\})>0$ and $\phi_2(\{i\})>0$ for every $i\in\N$. Then for every $m\in\N$ there is $L>0$ such that $\hat\phi_2(y)\leq L\hat\phi_1(y)$ for every $y\in\R^{m}$.
\end{lemma}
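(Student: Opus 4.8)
The plan is to exploit finite-dimensionality: on the fixed coordinate set $\{1,\dots,m\}$ the functional $\hat\phi$ is squeezed between two multiples of the largest coordinate, with constants controlled by the point masses $\phi(\{k\})$. The tool I would use is the two-sided estimate recorded earlier in Section~3.3, namely that for any non-pathological lsc submeasure $\phi$, any $x\in\R^\N$ and any $A\subseteq\N$,
\[
\sup_{k\in A}\bigl(\phi(\{k\})|x_k|\bigr)\leq\hat\phi(x\cdot\chi_A)\leq\Bigl(\sup_{k\in A}|x_k|\Bigr)\sum_{k\in A}\phi(\{k\}).
\]

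Applying this with $A=\{1,\dots,m\}$ to a finite sequence $y\in\R^m$ (understood as $y^\frown 0$), the upper estimate for $\phi_2$ gives
\[
\hat\phi_2(y)\leq\Bigl(\max_{k\leq m}|y_k|\Bigr)\sum_{k\leq m}\phi_2(\{k\}),
\]
while the lower estimate for $\phi_1$, combined with $\phi_1(\{k\})\geq\min_{j\leq m}\phi_1(\{j\})$ for all $k\leq m$, yields
\[
\hat\phi_1(y)\geq\max_{k\leq m}\bigl(\phi_1(\{k\})|y_k|\bigr)\geq\Bigl(\min_{j\leq m}\phi_1(\{j\})\Bigr)\max_{k\leq m}|y_k|.
\]
Since the last displayed minimum is strictly positive, I can rearrange this into $\max_{k\leq m}|y_k|\leq\hat\phi_1(y)/\min_{j\leq m}\phi_1(\{j\})$ and substitute it into the upper bound for $\hat\phi_2(y)$. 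This produces the desired inequality with
\[
L=\frac{\sum_{k\leq m}\phi_2(\{k\})}{\min_{j\leq m}\phi_1(\{j\})}.
\]

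It then remains only to check that $L$ is a well-defined positive real: the denominator is a minimum of finitely many strictly positive numbers $\phi_1(\{j\})$ (this is exactly where the hypothesis $\phi_1(\{i\})>0$ enters), and the numerator is a finite sum of the finite quantities $\phi_2(\{k\})$, which are finite by the very definition of a submeasure. I do not expect any serious obstacle here; the computation is essentially the whole argument. The only point worth flagging is the degenerate case $y=0$, where both sides vanish and the inequality is trivial, so no division by zero occurs. The genuine content of the statement is the dependence of $L$ on $m$: the constant $\sum_{k\leq m}\phi_2(\{k\})/\min_{j\leq m}\phi_1(\{j\})$ will in general grow without bound as $m\to\infty$, so no uniform constant can work, which is precisely why the lemma is phrased separately for each fixed $m$.
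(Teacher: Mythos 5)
Your proof is correct and is essentially the paper's own argument: you arrive at the identical constant $L=\sum_{k\leq m}\phi_2(\{k\})/\min_{j\leq m}\phi_1(\{j\})$ via the same squeeze (bounding $\hat\phi_2(y)$ above by $\max_k|y_k|\cdot\sum_{k\leq m}\phi_2(\{k\})$ and $\hat\phi_1(y)$ below by $\min_j\phi_1(\{j\})\cdot\max_k|y_k|$), the only cosmetic difference being that you cite the two-sided estimate from the Section~3.3 remark while the paper redoes that computation inline after normalizing $y$ by its maximal coordinate $r$.
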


\begin{proof}
Define:
\[
L=\frac{\sum_{i\leq m}\phi_2(\{i\})}{\min_{i\leq m}\phi_1(\{i\})}.
\]
Then $L>0$. Fix any $y\in\R^m$. If $y$ is constantly equal to zero, then $\hat\phi_2(y)=0\leq L\hat\phi_1(y)$ and we are done. Otherwise, find $r=\max_{i\leq m}|y_i|$ and let $j\leq m$ be such that $r=|y_j|$. Note that:
\begin{align*}
\hat\phi_2\left(\frac{y}{r}\right) & \leq\sum_{i\leq m}\phi_2(\{i\})\frac{|y_i|}{r}\leq\sum_{i\leq m}\phi_2(\{i\})= L\min_{i\leq m}\phi_1(\{i\})\cr 
& \leq L\frac{|y_j|}{r}\phi_1(\{j\})\leq L\hat\phi_1\left(\frac{y}{r}\right).    
\end{align*}
Hence, after multiplying by $r$ we get that $\hat\phi_2(y)\leq L\hat\phi_1(y)$.
\end{proof}

\begin{theorem}
\label{general-sequences}
Let $\phi_1$ and $\phi_2$ be two non-pathological lsc submeasures and assume that $\phi_1(\{i\})>0$ and $\phi_2(\{i\})>0$ for every $i\in\N$. The following are equivalent:
\begin{itemize}
    \item[(a)] $\phi_2\preceq\phi_1$;
    \item[(b)] $\EXH(\phi_1)\subseteq\EXH(\phi_2)$;
    \item[(c)] $\FIN(\phi_1)\subseteq\FIN(\phi_2)$;
    \item[(d)] $\EXH(\phi_1)\subseteq\FIN(\phi_2)$;
\end{itemize}
\end{theorem}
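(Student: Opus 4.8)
The plan is to prove the chain (a)$\Rightarrow$(b), (a)$\Rightarrow$(c), (b)$\Rightarrow$(d), (c)$\Rightarrow$(d), and finally (d)$\Rightarrow$(a), only the last being substantial. Throughout I would use three elementary properties of $\hat\phi$ that follow from it being a norm (Proposition~\ref{FBN}) together with its definition: homogeneity $\hat\phi(cx)=|c|\,\hat\phi(x)$; monotonicity, i.e.\ $|x_n|\leq|y_n|$ for all $n$ implies $\hat\phi(x)\leq\hat\phi(y)$, and in particular $\hat\phi(x\cdot\chi_A)\leq\hat\phi(x)$; and subadditivity, which for blocks with pairwise disjoint supports upgrades to $\hat\phi\bigl(\sum_j z^{(j)}\bigr)\leq\sum_j\hat\phi(z^{(j)})$. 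I would also repeatedly invoke the identity $\hat\phi(x)=\lim_{n\to\infty}\hat\phi(x\cdot\chi_{\{1,\ldots,n\}})$ recorded after the definition of $\FIN(\phi)$.

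For the easy directions: given (a) with constant $M$, the finite-sequence inequality $\hat\phi_2(x)\leq M\hat\phi_1(x)$ passes to all of $\R^\N$ by applying it to the truncations $x\cdot\chi_{\{1,\ldots,n\}}$ and letting $n\to\infty$ through the limit identity. This immediately yields $\FIN(\phi_1)\subseteq\FIN(\phi_2)$, and, applied instead to the tails $x\cdot\chi_{\{n,n+1,\ldots\}}$, it yields $\EXH(\phi_1)\subseteq\EXH(\phi_2)$; hence (a) gives both (b) and (c). Both (b) and (c) give (d) for free, since $\EXH(\phi_i)\subseteq\FIN(\phi_i)$ by Proposition~\ref{FBN}.

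The heart is (d)$\Rightarrow$(a), which I would prove by contraposition: assuming $\phi_2\not\preceq\phi_1$, I would build a single $y\in\EXH(\phi_1)\setminus\FIN(\phi_2)$. The negation of (a) supplies, for every $M>0$, a finite witness $x$ with $\hat\phi_2(x)>M\hat\phi_1(x)$. The decisive observation is that such witnesses must have support escaping to infinity: by Lemma~\ref{lem-L}, on any fixed initial segment $\{1,\ldots,m\}$ the ratio $\hat\phi_2/\hat\phi_1$ is bounded by some $L$, so splitting a witness as $x=x'+x''$ (head on $\{1,\ldots,m\}$, tail beyond) and combining subadditivity of $\hat\phi_2$, Lemma~\ref{lem-L} applied to $x'$, and monotonicity of $\hat\phi_1$, one gets $M\hat\phi_1(x)<\hat\phi_2(x)\leq L\hat\phi_1(x)+\hat\phi_2(x'')$, so the tail inherits a large ratio $\hat\phi_2(x'')>(M-L)\hat\phi_1(x'')$. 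Thus for every $m$ and every target $M_0$ there is a finitely supported witness living entirely past coordinate $m$ (take $M=M_0+L$). I would then iterate: choosing thresholds $M_j=4^j$ and building blocks $y^{(j)}$ on pairwise disjoint index intervals marching to infinity, rescaled by homogeneity so that $\hat\phi_1(y^{(j)})=2^{-j}$ while $\hat\phi_2(y^{(j)})>2^{j}$. Setting $y=\sum_j y^{(j)}$, disjoint-support subadditivity gives $\hat\phi_1(y)\leq\sum_j 2^{-j}<\infty$ with vanishing tails, so $y\in\EXH(\phi_1)$, while monotonicity gives $\hat\phi_2(y)\geq\hat\phi_2(y^{(j)})>2^j$ for every $j$, so $\hat\phi_2(y)=\infty$ and $y\notin\FIN(\phi_2)$, contradicting (d).

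The main obstacle is exactly this escaping-support step inside (d)$\Rightarrow$(a): a naive attempt to reuse one fixed finite witness in every block fails because $\hat\phi$ is not shift-invariant, and a large ratio occurring somewhere does not by itself let one place it far out without the head's $\phi_1$-mass spoiling exhaustiveness. Lemma~\ref{lem-L} is precisely what forces the large-ratio behaviour to recur arbitrarily far along the axis, and the split-and-discard-the-head estimate is the step I would verify most carefully — in particular that the retained tail $x''$ is nonzero, which holds because $\phi_1(\{i\})>0$ makes $\hat\phi_1$ vanish only on the zero sequence.
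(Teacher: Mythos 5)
Your proposal is correct and follows essentially the same route as the paper's proof: the easy implications via the truncation/limit identity and $\EXH\subseteq\FIN$, and the contrapositive of (d)$\implies$(a) via Lemma~\ref{lem-L}, the head/tail split forcing witnesses to escape to infinity, and a sum of disjointly supported blocks rescaled to have $\hat\phi_1$-mass $2^{-j}$ but $\hat\phi_2$-mass exceeding $2^j$. The only differences are bookkeeping (you fix thresholds $4^j$ and add the Lemma~\ref{lem-L} constant on the fly, whereas the paper interleaves the choice of exponents $n_{k+1}$ with the constants $L_k$), so the two arguments are the same in substance.
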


\begin{proof}
(a)$\implies$(b) and (a)$\implies$(c) are clear, since $\hat\phi(x)=\lim_{n\to\infty}\hat\phi(x\cdot\chi_{\{1,2,\ldots,n\}})$ for every $x\in\R^\N$.

(b)$\implies$(d) and (c)$\implies$(d) follow from $\EXH(\phi_1)\subseteq\FIN(\phi_1)$ and $\EXH(\phi_2)\subseteq\FIN(\phi_2)$ (see Proposition \ref{FBN}).

(d)$\implies$(a): Suppose that $\phi_2\not\preceq\phi_1$, i.e., for every $n\in\N$ there is a finite real sequence $z$ such that $\hat\phi_2(z)>2^{2n+1}\hat\phi_1(z)$.

We will recursively construct sequences $(n_k),(m_k),(L_k)\subseteq\N$ and $(x_k)\subseteq\bigcup_{n\in\N}\R^n$ such that for each $k\in\N$:
\begin{itemize}
    \item[(i)] $n_1=1$ and $n_{k+1}>n_k$;
    \item[(ii)] $2^{2n_{k+1}}>L_k$;
    \item[(iii)] $\hat\phi_2(y)\leq L_k\hat\phi_1(y)$ for every $y\in\R^{m_k}$;
    \item[(iv)] $m_k$ is the length of $x_k$;
    \item[(v)] $\hat\phi_1(x_k\chi_{\{m_{k-1}+1,m_{k-1}+2,\ldots,m_k\}})=\frac{1}{2^{n_k}}$;
    \item[(vi)] $\hat\phi_2(x_k\chi_{\{m_{k-1}+1,m_{k-1}+2,\ldots,m_k\}})>2^{n_k}$.
\end{itemize}

Start with $n_1=1$ (so that item (i) is met; item (ii) is empty in this case), using our assumption we find some finite real sequence $z_1$ such that $\hat\phi_2(z_1)>2^{2n_1+1}\hat\phi_1(z_1)$ and put $x_1=\frac{z_1}{2^{n_1}\hat\phi_1(z_1)}$. Let $m_1$ be the length of $x_1$. Note that items (v) and (vi) are satisfied, since:
\[
\hat\phi_2(x_1)=\frac{\hat\phi_2(z_1)}{2^{n_1}\hat\phi_1(z_1)}>\frac{2^{2n_1+1}\hat\phi_1(z_1)}{2^{n_1}\hat\phi_1(z_1)}=2^{n_1+1}>2^{n_1}.
\]
Using Lemma \ref{lem-L}, we can find $L_1\in\N$ satisfying item (iii).

If $n_i,m_i,L_i$ and $x_i$ for all $i\leq k$ are already defined, we can find $n_{k+1}$ such that items (i) and (ii) are met. By our assumption, there is some finite real sequence $z_{k+1}$ such that $\hat\phi_2(z_{k+1})>2^{2n_{k+1}+1}\hat\phi_1(z_{k+1})$. Let $m_{k+1}$ be the length of $z_{k+1}$. Observe that $m_{k+1}>m_k$ (by items (ii) and (iii)). Moreover, $\hat\phi_2(z_{k+1}\cdot\chi_{\N\setminus\{1,2,\ldots,m_k\}})>\left(2^{2n_{k+1}+1}-L_k\right)\hat\phi_1(z_{k+1}\cdot\chi_{\N\setminus\{1,2,\ldots,m_k\}})>2^{2n_{k+1}}\hat\phi_1(z_{k+1}\cdot\chi_{\N\setminus\{1,2,\ldots,m_k\}})$. Indeed, the second inequality follows from item (ii) and if the first one would be false, using item (iii) we should get:
\begin{align*}
    \hat\phi_2(z_{k+1}) & \leq \hat\phi_2(z_{k+1}\cdot\chi_{\{1,2,\ldots,m_k\}})+\hat\phi_2(z_{k+1}\cdot\chi_{\N\setminus\{1,2,\ldots,m_k\}})\\ 
    &\leq L_k\hat\phi_1(z_{k+1}\cdot\chi_{\{1,2,\ldots,m_k\}})+\left(2^{2n_{k+1}+1}-L_k\right)\hat\phi_1(z_{k+1}\cdot\chi_{\N\setminus\{1,2,\ldots,m_k\}})\\
    & \leq \left(L_k+\left(2^{2n_{k+1}+1}-L_k\right)\right)\hat\phi_1(z_{k+1})=2^{2n_{k+1}+1}\hat\phi_1(z_{k+1}),
\end{align*}
which contradicts the choice of $z_{k+1}$. Put:
\[
(x_{k+1})_i=\begin{cases}
    (x_k)_i, & \text{if }i\leq m_k,\\
    \frac{(z_{k+1})_i}{2^{n_{k+1}}\hat\phi_1(z_{k+1}\cdot\chi_{\N\setminus\{1,2,\ldots,m_k\}})}, & \text{if }m_k<i\leq m_{k+1}.
\end{cases}
\]
Then we have:
\[
\hat\phi_2(x_{k+1}\chi_{\{m_{k}+1,m_{k}+2,\ldots,m_{k+1}\}})=\frac{\hat\phi_2(z_{k+1}\chi_{\{m_{k}+1,m_{k}+2,\ldots,m_{k+1}\}})}{2^{n_{k+1}}\hat\phi_1(z_{k+1}\cdot\chi_{\N\setminus\{1,2,\ldots,m_k\}})}>2^{n_{k+1}}.
\]
Using Lemma \ref{lem-L}, find $L_{k+1}$ satisfying item (iii) and observe that all conditions are met.

Once the construction is completed, define $x=\bigcup_{k\in\N}x_k$. We need to show that $x\in\EXH(\phi_1)\setminus\FIN(\phi_2)$. 

The fact that $x\notin\FIN(\phi_2)$ follows from the observation that for each $k\in\N$ we have $\hat\phi_2(x)\geq\hat\phi_2(x_k\chi_{\{m_{k-1}+1,m_{k-1}+2,\ldots,m_k\}})>2^{n_{k+1}}$ (by item (vi)).

On the other hand, $x\in\EXH(\phi_1)$ follows from:
\[
\hat\phi_1(x\cdot\chi_{\N\setminus\{1,2,\ldots,m_k\}})\leq\sum_{i>k}\hat\phi_1(x_i\chi_{\{m_{i-1}+1,m_{i-1}+2,\ldots,m_i\}})=\sum_{i>k}\frac{1}{2^{n_i}}\leq\frac{1}{2^k}.
\]
\end{proof}

\subsection{Variations}

\begin{theorem}
\label{general-variations}
Let $\phi_1$ and $\phi_2$ be two non-pathological lsc submeasures and assume that $\phi_1(\{i\})>0$ and $\phi_2(\{i\})>0$ for every $i\in\N$. The following are equivalent:
\begin{itemize}
    \item[(a)] $\phi_2\preceq_m\phi_1$;
    \item[(b)] $\mEXH(\phi_1)\subseteq\mEXH(\phi_2)$; 
    \item[(c)] $\mFIN(\phi_1)\subseteq\mFIN(\phi_2)$;
    \item[(d)] $\mEXH(\phi_1)\subseteq\mFIN(\phi_2)$.
\end{itemize}
Moreover, assuming that $\phi_1$ and $\phi_2$ satisfy:
\begin{equation}\label{eq_monotonicity}
\forall_{j=1,2}\forall_{x,y\in\mathbb{R}^\N}\left(\left(\forall_{n\in\N}\sum_{i=1}^n |x_i|\leq\sum_{i=1}^n |y_i|\right)\implies \hat{\phi}_j(x)\leq\hat{\phi}_j(y)\right),
\end{equation}
the above conditions are also equivalent to the following one:
\begin{itemize}
    \item[(e)] $\BV(\phi_1)\subseteq\BV(\phi_2)$.
\end{itemize}
\end{theorem}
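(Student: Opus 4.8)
The plan is to establish the equivalence of (a)--(d) by adapting the proof of Theorem~\ref{general-sequences} to monotone sequences, and then to insert (e) into the list via the two implications (a)$\implies$(e) and (e)$\implies$(c); both rest on comparing a sequence with the non-increasing rearrangement of its entries, and the latter will use hypothesis~\eqref{eq_monotonicity} in an essential way. For (a)--(d) I would run the cycle (a)$\implies$(c)$\implies$(d)$\implies$(a) together with the trivial (b)$\implies$(d) and the implication (a)$\implies$(b). The step (a)$\implies$(c) is immediate, because for $x\in\mFIN(\phi_1)$ every initial segment $x\chi_{\{1,\dots,n\}}$ is non-increasing, so $\hat\phi_2(x\chi_{\{1,\dots,n\}})\le M\hat\phi_1(x\chi_{\{1,\dots,n\}})\le M\hat\phi_1(x)$ and one lets $n\to\infty$; both (c)$\implies$(d) and (b)$\implies$(d) follow from $\mEXH\subseteq\mFIN$ (Proposition~\ref{prop:mEXHsubsetmFIN}). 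For (a)$\implies$(b) one must transfer a statement about the \emph{tails} $x\chi_{\{n,n+1,\dots\}}$, which are no longer monotone; here I would dominate such a tail by the genuinely non-increasing sequence equal to $|x_n|$ on $\{1,\dots,n\}$ and to $x$ afterwards, apply (a) to it, and use that testing $\preceq_m$ on indicators of final segments already gives $\phi_2(\{n,n+1,\dots\})\le M\phi_1(\{n,n+1,\dots\})$ to keep the head term controlled.

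The substantive step is (d)$\implies$(a), which I would obtain by repeating the recursive construction of Theorem~\ref{general-sequences} \emph{verbatim}, with a single modification guaranteeing that the witness lies in $\Mon$. Since $\phi_2\not\preceq_m\phi_1$, the sequences $z_k$ produced by the failure of (a) may be taken non-increasing and, after discarding trailing zeros, strictly positive, so each block of the constructed $x$ is a positive multiple of a tail of some $z_k$ and is therefore non-increasing within the block. The only new point is monotonicity \emph{across} consecutive blocks. Because the $(k{+}1)$-st block has $\hat\phi_1$-mass $2^{-n_{k+1}}$ and $\hat\phi_1(x\cdot\chi_A)\ge\phi_1(\{k\})|x_k|$ for $k\in A$, its first (largest) entry is at most $2^{-n_{k+1}}/\phi_1(\{m_k+1\})$; since $(x_k)_{m_k}>0$ and $\phi_1(\{m_k+1\})>0$ are already fixed when $n_{k+1}$ is chosen, taking $n_{k+1}$ large forces this entry to be $\le (x_k)_{m_k}$. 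Hence $x\in\Mon$, and exactly as in Theorem~\ref{general-sequences} one gets $x\in\mEXH(\phi_1)\setminus\mFIN(\phi_2)$, contradicting (d).

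For the equivalence with (e) I would first prove (a)$\implies$(e). Given $x\in\BV(\phi_1)$ (so $x$ is bounded and $S_1:=\sup_{J\in\cP_I}\hat\phi_1(x(J))<\infty$) and any $J\in\cP_I$, write $d=x(J)$ and let $d^*$ be its non-increasing rearrangement. The partial sums of $d^*$ dominate those of $d$, so \eqref{eq_monotonicity} gives $\hat\phi_2(d)\le\hat\phi_2(d^*)$; since $d^*$ is non-increasing, (a) together with $\hat\phi(y)=\lim_n\hat\phi(y\chi_{\{1,\dots,n\}})$ gives $\hat\phi_2(d^*)\le M\hat\phi_1(d^*)$; and each truncation $d^*\chi_{\{1,\dots,n\}}$ equals $x(J')$ for the partition $J'$ listing the $n$ intervals of largest increment in decreasing order followed by degenerate intervals, whence $\hat\phi_1(d^*)=\lim_n\hat\phi_1(d^*\chi_{\{1,\dots,n\}})\le S_1$. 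Combining these, $\hat\phi_2(x(J))\le M\,S_1$ for every $J$, so $x\in\BV(\phi_2)$.

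Finally I would prove (e)$\implies$(c) by contraposition, and this is where I expect the main obstacle. From $x\in\mFIN(\phi_1)\setminus\mFIN(\phi_2)$ (non-increasing, $\hat\phi_1(x)<\infty$, $\hat\phi_2(x)=\infty$) I build the ``spike'' function $f$ that on disjoint intervals $I_n=[a_n,b_n]$ rises linearly from $0$ to $x_n$ at the midpoint $c_n$ and back to $0$, and vanishes elsewhere; it is bounded by $x_1$. Choosing the left halves $[a_n,c_n]$ as a partition $J^*$ gives $f(J^*)=x$, so $\sup_{J\in\cP_I}\hat\phi_2(f(J))\ge\hat\phi_2(x)=\infty$ and $f\notin\BV(\phi_2)$. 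The crux is the reverse bound $f\in\BV(\phi_1)$, for which I would prove the majorization $\sum_{i\le k}d^*_i\le 3\sum_{i\le k}x_i$ for the non-increasing rearrangement $d^*$ of the increments $d=f(J)$ of an \emph{arbitrary} partition $J$: assigning each interval to the least-indexed spike it meets, one checks each increment is at most $x_n$ for its spike $n$, that disjointness allows at most one interval assigned to $n$ to extend to the right of $I_n$ (contributing $\le x_n$), and that the remaining ones are disjoint subintervals of the unimodal bump on $I_n$ of total variation $2x_n$, giving $\le 3x_n$ per spike and hence the claim since $x$ is non-increasing. Then \eqref{eq_monotonicity} yields $\hat\phi_1(f(J))\le\hat\phi_1(d^*)\le\hat\phi_1(3x)=3\hat\phi_1(x)<\infty$, so $f\in\BV(\phi_1)$, contradicting (e). (If $\lim_n x_n>0$ then $\hat\phi_1(x)\ge(\lim_n x_n)\phi_1(\N)$ forces $\phi_1(\N)<\infty$, so $\BV(\phi_1)=B(I)$ by Proposition~\ref{prop-basic-1} and $f\in\BV(\phi_1)$ holds trivially.) Everything outside this combinatorial majorization is a routine transfer through rearrangements and the homogeneity and subadditivity of $\hat\phi$.
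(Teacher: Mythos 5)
Your treatment of (a)$\implies$(c), (c)$\implies$(d), (b)$\implies$(d), and (d)$\implies$(a) matches the paper's proof (your cross-block monotonicity device --- choosing $n_{k+1}$ so large that the first entry of the new block, at most $2^{-n_{k+1}}/\phi_1(\{m_k+1\})$, drops below the last entry of the previous block --- is literally the paper's conditions (vii)--(viii), and your remark that the $z_k$ must have no trailing zeros is a detail the paper glosses over). The genuine gap is your step (a)$\implies$(b). The inequality you invoke there, that ``testing $\preceq_m$ on indicators of final segments gives $\phi_2(\{n,n+1,\ldots\})\leq M\phi_1(\{n,n+1,\ldots\})$'', is not available: the sequence $\chi_{\{n,n+1,\ldots\}}=(0,\ldots,0,1,1,\ldots)$ is non-\emph{de}creasing, hence not an admissible test sequence for $\preceq_m$, and without it the head of your dominating sequence $y^{(n)}$, which carries $\hat\phi_1$-mass $|x_n|\,\phi_1(\{1,\ldots,n\})$, is uncontrolled whenever $x\notin c_0$.

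Moreover, this step cannot be repaired, because the implication (a)$\implies$(b) is false. Let $\phi_1(C)=\sum_{i\in C}a_i$ with $a_1=2$ and $a_i=2^{-i}$ for $i\geq 2$, let $\delta(C)=1$ for $C\neq\emptyset$, $\delta(\emptyset)=0$, and let $\phi_2=\max(\phi_1,\delta)$; this is a non-pathological lsc submeasure with $\phi_2(\{i\})>0$ (the paper itself uses the maximum construction in the corollary following Proposition~\ref{prop:phi=psi}). Since $\phi_1(\{1,\ldots,k\})\geq 2>1$, we have $\phi_2(\{1,\ldots,k\})=\phi_1(\{1,\ldots,k\})$ for all $k$, so for every finite non-increasing $y=(y_1,\ldots,y_m)$ (put $y_{m+1}=0$) and every measure $\mu\leq\phi_2$, Abel summation gives
\[
\sum_{i\leq m}\mu(\{i\})|y_i|=\sum_{k\leq m}\bigl(|y_k|-|y_{k+1}|\bigr)\mu(\{1,\ldots,k\})\leq\sum_{k\leq m}\bigl(|y_k|-|y_{k+1}|\bigr)\phi_1(\{1,\ldots,k\})=\hat\phi_1(y),
\]
so $\phi_2\preceq_m\phi_1$ with $M=1$ and (a) holds. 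Yet the constant sequence $(1,1,\ldots)$ lies in $\mEXH(\phi_1)$, since $\sum_{i\geq n}a_i\to 0$, but not in $\mEXH(\phi_2)$, since every Dirac measure is $\leq\delta\leq\phi_2$ and hence $\hat\phi_2(\chi_{\{n,n+1,\ldots\}})\geq 1$ for all $n$. Thus (a), (c), (d) hold (and so does (e), both variation spaces being $B(I)$ here), while (b) fails. You are in good company: the paper's own proof of (a)$\implies$(b) breaks at the same point --- from $x\in\Mon\setminus\mEXH(\phi_2)$ it concludes ``$\mFIN(\phi_2)\subseteq c_0$ by Proposition~\ref{prop: mexh-basic}'', but that proposition only yields $\mEXH(\phi_2)\subseteq c_0$; in the example above $\mFIN(\phi_2)=\Mon\not\subseteq c_0$, the chosen witness may lie in $\mFIN(\phi_2)$ without being in $c_0$ (the constant sequence does), and the subsequent construction, which needs $\lim_n x(n)=0$, collapses. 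The defensible statement is the equivalence of (a), (c), (d) (and, under \eqref{eq_monotonicity}, (e)), together with the one-way implication (b)$\implies$(d); that is exactly what your proposal proves once the (a)$\implies$(b) step is deleted.

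The (e) part of your proposal is sound up to small repairs, and here you genuinely diverge from the paper in one place. In (a)$\implies$(e) the decreasing rearrangement $d^*$ of $x(J)$ need not exist when $x(J)\notin c_0$; either run your estimate on the finite truncations $x(J)\cdot\chi_{\{1,\ldots,n\}}$ (whose rearrangements always exist and still realize as $x(J')$), or do as the paper does: dispose of $\phi_1(\N)<\infty$ via Proposition~\ref{prop-basic-1} and use that for $\phi_1(\N)=\infty$ all functions in $\BV(\phi_1)$ are regulated, whence $x(J)\in c_0$. In (e)$\implies$(c) the paper uses an oscillating sawtooth with $f(2^{-k})=\sum_{n\leq k}(-1)^{n+1}|x_n|$ and quotes \cite{PW-Nachrichten} for the partial-sum comparison, whereas your separated spikes plus the self-contained assignment lemma avoid the external citation at the cost of a constant; note your constant $3$ should be $4$, since at most one interval assigned to spike $n$ may also protrude to the \emph{left} of $I_n$, but any constant suffices when feeding the majorization into \eqref{eq_monotonicity}, so this is immaterial.
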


\begin{proof}
Firstly, we will show equivalence of items (a), (c) and (d). Secondly, we will show the implications (b)$\implies$(d) and (a)$\implies$(b). Lastly, we will deal with item (e) by showing (a)$\implies$(e) and (e)$\implies$(c).

(a)$\implies$(c): Straightforward, since $\hat\phi(x)=\lim_{n\to\infty}\hat\phi(x\cdot\chi_{\{1,2,\ldots,n\}})$ for every $x\in\R^\N$.

(c)$\implies$(d): This follows from the inclusion $\mEXH(\phi_1)\subseteq\mFIN(\phi_1)$ (see Proposition \ref{prop:mEXHsubsetmFIN}).

(d)$\implies$(a): Suppose that $\phi_2\not\preceq_m\phi_1$, i.e., for every $n\in\N$ there is a finite nonincreasing real sequence $z$ such that $\hat\phi_2(z)>2^{2n+1}\hat\phi_1(z)$.

We will recursively construct sequences $(n_k),(m_k),(L_k)\subseteq\N$ and $(x_k)\subseteq\bigcup_{n\in\N}\R^\N$ such that for each $k\in\N$:
\begin{itemize}
    \item[(i)] $n_1=1$ and $n_{k+1}>n_k$;
    \item[(ii)] $2^{2n_{k+1}}>L_k$;
    \item[(iii)] $\hat\phi_2(y)\leq L_k\hat\phi_1(y)$ for every $y\in\R^{m_k}$;
    \item[(iv)] $m_k$ is the length of $x_k$;
    \item[(v)] $\hat\phi_1(x_k\chi_{\{m_{k-1}+1,m_{k-1}+2,\ldots,m_k\}})=\frac{1}{2^{n_k}}$;
    \item[(vi)] $\hat\phi_2(x_k\chi_{\{m_{k-1}+1,m_{k-1}+2,\ldots,m_k\}})>2^{n_k}$;
    \item[(vii)] $\frac{1}{2^{n_{k+1}}}<\hat{\phi}_1(e_{m_k+1}x_k(m_k))$ (here $e_i\in\mathbb{R}^\N$ is the sequence having $1$ on $i$th coordinate and zeros on all other coordinates);
    \item[(viii)] $x_k\in\Mon$.
\end{itemize}
Note that items (i)-(vi) are exactly the same as in the proof of the implication (d)$\implies$(a) in Theorem \ref{general-sequences}. Hence, we will omit some details.

Start with $n_1=1$ (note that item (vii) is empty in the case of $k=1$), find some finite nonincreasing real sequence $z_1$ such that $\hat\phi_2(z_1)>2^{2n_1+1}\hat\phi_1(z_1)$ and put $x_1=\frac{z_1}{2^{n_1}\hat\phi_1(z_1)}$. Note that item (viii) is satisfied, since $z_1$ is nonincreasing. Let $m_1$ be the length of $x_1$. Then items (v) and (vi) are satisfied for the same reason as in the proof of Theorem \ref{general-sequences}. Moreover, using Lemma \ref{lem-L}, we find $L_1\in\N$ as in (iii).

If $n_i,m_i,L_i$ and $x_i$ for all $i\leq k$ are already defined, we can find $n_{k+1}$ such that items (i), (ii) and (vii) are met. There is some finite nonincreasing real sequence $z_{k+1}$ such that $\hat\phi_2(z_{k+1})>2^{2n_{k+1}+1}\hat\phi_1(z_{k+1})$. Let $m_{k+1}$ be the length of $z_{k+1}$. Similarly as in the proof of Theorem \ref{general-sequences}, we have $m_{k+1}>m_k$ and $\hat\phi_2(z_{k+1}\cdot\chi_{\N\setminus\{1,2,\ldots,m_k\}})>2^{2n_{k+1}}\hat\phi_1(z_{k+1}\cdot\chi_{\N\setminus\{1,2,\ldots,m_k\}})$. Put:
\[
(x_{k+1})_i=\begin{cases}
    (x_k)_i, & \text{if }i\leq m_k,\\
    \frac{(z_{k+1})_i}{2^{n_{k+1}}\hat\phi_1(z_{k+1}\cdot\chi_{\N\setminus\{1,2,\ldots,m_k\}})}, & \text{if }m_k<i\leq m_{k+1}.
\end{cases}
\]
Then items (iv), (v) and (vi) are met. We will show that (viii) is satisfied. Observe that $x_{k+1}\restriction\{1,\ldots,m_k\}$ is nonincreasing by (viii) applied to $k$ and $x_{k+1}\restriction\{m_k+1,m_k+2,\ldots\}$ is nonincreasing, since $z_{k+1}$ is. Thus, it suffices to check that $|x_{k+1}(m_k)|\geq|x_{k+1}(m_k+1)|$ and the latter follows from:
\begin{align*}
    |x_{k+1}(m_k+1)|\hat{\phi}_1(e_{m_k+1}) & =\hat{\phi}_1(|x_{k+1}(m_k+1)|e_{m_k+1})\cr
    & \leq\hat\phi_1(x_{k+1}\chi_{\{m_{k}+1,m_{k}+2,\ldots,m_{k+1}\}})=\frac{1}{2^{n_{k+1}}}\cr
    & <\hat{\phi}_1(e_{m_k+1}x_k(m_k))=|x_{k+1}(m_k)|\hat{\phi}_1(e_{m_k+1}).
\end{align*}
To end the recursion step, use Lemma \ref{lem-L} to find $L_{k+1}$ as in (iii).

Once the construction is completed, define $x=\bigcup_{k\in\N}x_k$. Then $x\in\Mon$ follows from (viii) and $x\in\EXH(\phi_1)\setminus\FIN(\phi_2)$ can be shown in the same way as in the proof of Theorem \ref{general-sequences}. 

(b)$\implies$(d): This follows from the inclusion $\mEXH(\phi_2)\subseteq\mFIN(\phi_2)$ (see Proposition \ref{prop:mEXHsubsetmFIN}).

(a)$\implies$(b): Assume that $\mEXH(\phi_1)\not\subseteq\mEXH(\phi_2)$ and take any $x\in \mEXH(\phi_1)\setminus \mEXH(\phi_2)$. Then $x\in\Mon\setminus\mEXH(\phi_2)$, thus $\mFIN(\phi_2)\subseteq c_0 $ (by Proposition~\ref{prop: mexh-basic}).
By the fact that $\mEXH(\phi_1)\subseteq\mFIN(\phi_1)$ (Proposition \ref{prop:mEXHsubsetmFIN}), we have  $\hat{\phi_1}(x)<\infty$, thus,
without losing generality, we may assume that $x$ is non-negative, non-increasing and  $\hat{\phi_1}(x)=1$.

There are two possible cases: either $x\not\in\mFIN(\phi_2)$ or $x\in\mFIN(\phi_2)$.

In the case that $x\not\in\mFIN(\phi_2)$, we have $\hat{\phi_2}(x)=\infty$ and $\hat{\phi_1}(x)=1$.  Since $\hat{\phi_2}(x)=\lim_{n\to\infty}\hat{\phi_2}(x\cdot\chi_{\{1,2,\ldots,n\}})$ and $\hat{\phi_1}\left(x\cdot\chi_{\{1,2,\ldots,n\}}\right)\leq\hat{\phi_1}\left(x\right)=1$ for all $n$, we obtain:
$$\lim_{n\to\infty}\frac{\hat{\phi_2}\left(x\cdot\chi_{\{1,2,\ldots,n\}}\right)}{\hat{\phi_1}\left(x\cdot\chi_{\{1,2,\ldots,n\}}\right)}\geq \lim_{n\to\infty}\hat{\phi_2}\left(x\cdot\chi_{\{1,2,\ldots,n\}}\right)=\infty, $$
hence $\phi_2\not\preceq_m\phi_1$.

In the case that $x\in\mFIN(\phi_2)$, we know that $x\in c_0$, as $\mFIN(\phi_2)\subseteq c_0 $. Take $\alpha>0$ such that $\lim_{n\to\infty} \hat{\phi_2}(x\cdot\chi_{\{n,n+1,\ldots\}})=\alpha$ (such $\alpha$ exists, as $\hat{\phi_2}(x\cdot\chi_{\{n,n+1,\ldots\}})$ is a non-increasing sequence in $[0,\hat{\phi_2}(x)]$ and $x\notin\mEXH(\phi_2)$). 
Define recursively the sequence $(n_k)$ in such way that for all $k\in\N$ we get $\hat{\phi_1}(x\cdot\chi_{\{n_k+1,n_k+2,\ldots\}})\leq 1/k$ and $$\frac{x(n_{k+1})}{x(n_k)}\leq\frac{1}{k}.$$ 
This is possible, because $\lim_{n\to\infty}x(n)=0$, $x$ is non-increasing and $\lim_{n\to\infty} \hat{\phi_1}(x\cdot\chi_{\{n,n+1,\ldots\}})=0$.

Now, for any $k\in\N$, we can define the sequence $y_k$ by $y_k(n)=x(n_{k+1})$ for $n\leq n_{k+1}$ and $y_k(n)=x(n)$ otherwise. Clearly, $y_k$ is non-increasing as $x$ is non-increasing. Moreover, taking $m_k$ such that $\hat{\phi_2}\left(x\cdot\chi_{\{n_{k+1}+1,n_{k+1}+2,\ldots,m_k\}}\right)\geq \alpha/2$, we may notice that:
$$\frac{\hat{\phi_2}\left(y_k\cdot\chi_{\{1,2,\ldots,m_k\}} \right) }{\hat{\phi_1}\left(y_k\cdot\chi_{\{1,2,\ldots,m_k\}} \right) }\geq \frac{\hat{\phi_2}\left(y_k\cdot\chi_{\{n_{k+1}+1,n_{k+1}+2,\ldots,m_k\}}\right) }{\hat{\phi_1}\left(y_k\cdot \chi_{\{1,2\ldots,n_k  \}}   \right) +\hat{\phi_1}\left(y_k\cdot \chi_{\{n_k+1,n_k+2,\ldots,m_k  \}}   \right)} \geq   $$
$$\geq \frac{\hat{\phi_2}\left(x\cdot\chi_{\{n_{k+1}+1,n_{k+1}+2,\ldots,m_k\}}\right) }{\frac{x(n_{k+1})}{x(n_k)}\hat{\phi_1}\left(x\cdot \chi_{\{1,2\ldots,n_k  \}}   \right) +\hat{\phi_1}\left(x\cdot \chi_{\{n_k+1,n_k+2,\ldots,m_k  \}}   \right)}\geq \frac{\alpha/2}{\frac{1}{k}+\frac{1}{k} }=\frac{k\alpha}{4}, $$ 
hence $\phi_2\not\preceq_m\phi_1$.

(a)$\implies$(e): Assume that $\phi_2\preceq_m\phi_1$, i.e. there is $M>0$ such that $\phi_2(y)\leq M\phi_1(y)$for every non-increasing finite sequence $y$. Let $x\in\BV(\phi_1)$ and fix any $\hat{J}=(\hat{J}_n)\in\mathcal{P}_I$. If $\phi_1(\N)<+\infty$, then $BV(\phi_1)=BV(\phi_2)=B(I)$ (by Proposition \ref{prop-basic-1}), so we may assume that $\phi_1(\N)=+\infty$. Then we have $x(\hat J_n)\in c_0$ and we can apply the procedure described below. Let $\pi:\N\to\N$ be such that $|x(\hat{J}_{\pi(1)})|=\sup\{|x(\hat{J}_n)|:n\in\N\}$ and $|x(\hat{J}_{\pi(k+1)})|=\sup\{|x(\hat{J}_n)|:n\in\N\setminus\{\pi(1),\ldots,\pi(k)\}\}$ for all $k\in\N$. Then $J^\star=(\hat{J}_{\pi(n)})\in\mathcal{P}_I$ and $x(J^\star)\in\Mon$. Hence, by the condition imposed on $\phi_2$, we have:
\[
\hat{\phi}_2(x(\hat{J}))\leq\hat{\phi}_2(x(J^\star))\leq M\hat{\phi}_1(x(J^\star))\leq M\sup_{J\in\mathcal{P}_I}\hat{\phi}_1(x(J)).
\]
Since $\hat{J}$ was arbitrary, we get that 
$$\sup_{J\in\mathcal{P}_I}\hat{\phi}_2(x(J))\leq M\sup_{J\in\mathcal{J}_I}\hat{\phi}_1(x(J)),$$ 
so $x\in\BV(\phi_2)$.

(e)$\implies$(c): We will show that the condition $\mFIN(\phi_1)\not\subseteq\mFIN(\phi_2)$ implies $\BV(\phi_1)\not\subseteq\BV(\phi_2)$. 

Assume first that $\mFIN(\phi_1)=\Mon$. Since $\mFIN(\phi_1)\not\subseteq\mFIN(\phi_2)$, $\mFIN(\phi_2)\neq\Mon$. Then Proposition \ref{prop-basic-1} gives us $\BV(\phi_1)=B(I)\not\subseteq\BV(\phi_2)$.

Assume now that $\mFIN(\phi_1)\not=\Mon$. Fix $x=(x_n)\in\mFIN(\phi_1)\setminus\mFIN(\phi_2)$. Let $f:[0,1]\to\mathbb{R}$ be a piecewise linear function such that $f(1)=0$, $f(0)=\sum_{n=1}^\infty (-1)^{n+1}|x_n|$ and $f(\frac{1}{2^k})=\sum_{n=1}^k (-1)^{n+1}|x_n|$ for all $k\in\N$. This function is well-defined as $x\in c_0$ (by Proposition \ref{prop-basic-1}).. 

Observe that $f\notin\BV(\phi_2)$, since for the sequence of intervals $\hat{J}=(\hat{J}_n)\in\mathcal{J}_I$ given by $\hat{J}_1=[\frac{1}{2},1]$ and $\hat{J}_{k+1}=[\frac{1}{2^{k+1}},\frac{1}{2^{k}}]$ for all $k\in\N$, we have $|f(\hat{J}_k)|=|x_k|$. Thus $\sup_{J\in\mathcal{J}_I}\hat{\phi}_2(f(J))\geq\hat{\phi}_2(f(\hat{J}))=\hat{\phi}_2(x)=\infty$. On the other hand, for each $J\in\mathcal{J}_I$ we have:
\[
\forall_{n\in\N}\sum_{i=1}^n |f(J_i)|\leq\sum_{i=1}^n |f(\hat{J}_i)|.
\]
This is actually a simple observation, which may either be proved directly or deduced from the general observation (see \cite[Proposition 1.1]{PW-Nachrichten}) stating that when selecting the ends of the intervals we should select {\em points of varying monotonicity} to get higher value of the sum.

Hence, by the assumption imposed on $\phi_1$, we have $\sup_{J\in\mathcal{J}_I}\hat{\phi}_1(f(J))=\hat{\phi}_1(f(\hat{J}))=\hat{\phi}_1(x)<\infty$.
\end{proof}

\section{Particular case I: simple density ideals}

In this Section we are interested in lsc submeasures of the form $\phi_g(C)=\sup_{n\in\N}\frac{|C\cap\{1,2,\ldots,n\}|}{g(n)}$, where $g:\N\to\N$ satisfies the following conditions:
\begin{itemize}
    \item[(a)] $(g(n))$ is nondecreasing;
    \item[(b)] $\lim_n g(n)=\infty$;
    \item[(c)] $\frac{n}{g(n)}$ does not tend to zero;
    \item[(d)] $\frac{n}{g(n)}$ is nondecresing.
\end{itemize}
Ideals of the form $\Exh(\phi_g)$ (for functions $g$ satisfying all the above requirements except the last one) have been extensively studied in \cite{simpleden3}, \cite{simpleden2} and \cite{simpleden1}. For that reason, we decided to write the last item separately despite the fact that (c) follows from (d). We will denote by $\mathcal{G}$ the set of all functions $g$ satisfying conditions (a)-(d). Note that in this case $\hat{\phi}_g(x)=\sup_{n\in\N}\frac{\sum_{i=1}^n|x_i|}{g(n)}$.

In 1974 Chanturia introduced the concept of the {\em modulus of variation of the bounded function} (see \cite{Cha74}), which for $x\colon I\to\R$ is given as a sequence 
\[
v(x,n) = \sup_{{\mathcal P}_n} \sum_{k=1}^n |x(I_k)|,
\]
where ${\mathcal P}_n$ denotes the set of all $n$-element families of nonoverlapping intervals of $I$. In the mentioned paper \cite{Cha74} the Author introduced set of functions $V[g]$, for a given sequence $g\colon \N\to \R$, as a family of those functions for which $v(x,n) = O(g(n))$. These classes are now called {\em Chanturia (or Chanturiya) classes} in literature. One of the statements (see Theorem 1 in the mentioned paper) was that the necessary and sufficient condition for a sequence to be $v(x,n)$ for some function $x$ is that it is nondecreasing and concave.   These classes were studied since then in many papers, mainly in relation to a convergence of Fourier series and relations to other families of functions of bounded variation (see especially the relation between Chanturia classes and Waterman spaces given by Avdispahi\'c in \cite{Avdispahic}).

The sequences defining Chanturia classes are defined as real valued sequences but as we can see we may redefine any real-valued sequence $h(n)$ to a sequence $g(n) = \lceil h(n)\rceil$, which has values in natural numbers and the same asymptotics as $n\to+\infty$. One more important observation for nondecreasing and concave sequences $h(n)$ as considered in a context of Chanturia classes is that they are such that $\frac{n}{h(n)}$ are nondecreasing (as required by the definition of a family $\mathcal{G}$ above). Without the loss of generality we may assume that $h(0)=0$ and then from being concave we may deduce that for each $k\in\{1,...,n-1\}$ we have
\[
h(k) = h\left(\frac{k}{n}n + \left(1-\frac{k}{n}\right)0\right) \geq \frac{k}{n} h(n) + \left(1-\frac{k}{n}\right)h(0) = \frac{k}{n}h(n),
\]
which gives
\[
\frac{h(k)}{k} \geq \frac{h(n)}{n},
\]
as desired.

\begin{theorem}
The following are equivalent for every $g,h\in\mathcal{G}$:
\begin{itemize}
\item[(a)] $g(n) = O(h(n))$, i.e., there is $\eta>0$ such that $\frac{g(n)}{h(n)}\leq\eta$ for all $n\in N$;
\item[(b)] $\phi_h\preceq\phi_g$;
\item[(c)] $\phi_h\preceq_m\phi_g$;
\item[(d)] $\Fin(\phi_g)\subseteq\Fin(\phi_h)$;
\item[(e)] $\Exh(\phi_g)\subseteq\Exh(\phi_h)$;
\item[(f)] $\Exh(\phi_g)\subseteq\Fin(\phi_h)$;
\item[(g)] $\FIN(\phi_g)\subseteq\FIN(\phi_h)$;
\item[(h)] $\EXH(\phi_g)\subseteq\EXH(\phi_h)$;
\item[(i)] $\EXH(\phi_g)\subseteq\FIN(\phi_h)$;
\item[(j)] $\mFIN(\phi_g)\subseteq\mFIN(\phi_h)$;
\item[(k)] $\mEXH(\phi_g)\subseteq\mEXH(\phi_h)$;
\item[(l)] $\mEXH(\phi_g)\subseteq\mFIN(\phi_h)$;
\item[(m)] $\BV(\phi_g)\subseteq\BV(\phi_h)$.
\end{itemize}
\end{theorem}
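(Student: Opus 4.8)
The plan is to split the thirteen conditions into two clusters governed by the abstract results of Section~4, link the clusters through the triple (a), (b), (c), and finally attach the ideal-theoretic conditions (d), (e), (f). Throughout I would put $\phi_1=\phi_g$ and $\phi_2=\phi_h$, so that each general statement applies verbatim. The preliminary observation is that $\hat\phi_g(x)=\sup_{n}\frac{\sum_{i=1}^n|x_i|}{g(n)}$ makes the monotonicity hypothesis \eqref{eq_monotonicity} automatic: if $\sum_{i\le n}|x_i|\le\sum_{i\le n}|y_i|$ for every $n$, then each quotient defining $\hat\phi_g(x)$ is dominated by the corresponding one for $y$, whence $\hat\phi_g(x)\le\hat\phi_g(y)$ (and the same for $h$). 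With this in hand, Theorem~\ref{general-sequences} delivers the equivalence of (b), (g), (h), (i), and Theorem~\ref{general-variations}, whose supplementary clause now applies, delivers the equivalence of (c), (j), (k), (l), (m).

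Next I would merge the two clusters through condition (a). The implication (b)$\implies$(c) is immediate, since a bound valid for all finite sequences holds a fortiori for nonincreasing ones. For the remaining links I would use the test sequences $\chi_{\{1,\ldots,n\}}$, which are finite and nonincreasing. A short computation, relying on the facts that $\frac{n}{g(n)}$ and $g$ are both nondecreasing (two of the defining properties of $\mathcal{G}$), gives $\hat\phi_g(\chi_{\{1,\ldots,n\}})=\frac{n}{g(n)}$ and likewise $\hat\phi_h(\chi_{\{1,\ldots,n\}})=\frac{n}{h(n)}$. Plugging these into $\phi_h\preceq_m\phi_g$ yields $\frac{n}{h(n)}\le M\frac{n}{g(n)}$, i.e.\ $g(n)\le Mh(n)$, so (c)$\implies$(a); conversely (a)$\implies$(b) follows by writing $\frac{\sum_{i\le n}|x_i|}{h(n)}=\frac{g(n)}{h(n)}\cdot\frac{\sum_{i\le n}|x_i|}{g(n)}\le\eta\,\hat\phi_g(x)$ and passing to the supremum over $n$. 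Thus (a), (b), (c) are equivalent and the two clusters are fused.

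It then remains to fold in (d), (e), (f). Proposition~\ref{general-ideals}(a) gives (b)$\implies$(d) and (b)$\implies$(e), while Proposition~\ref{general-ideals}(b) gives (d)$\implies$(f) and (e)$\implies$(f); hence everything reduces to proving (f)$\implies$(a). I would argue the contrapositive: assuming $\frac{g(n)}{h(n)}$ is unbounded, I would produce for each $M$ a finite set $F$ with $\phi_g(F)\le\frac1M$ and $\phi_h(F)\ge M$, which by the characterization in Proposition~\ref{general-ideals}(c) yields $\Exh(\phi_g)\not\subseteq\Fin(\phi_h)$. The candidate sets are blocks $F=\{P+1,\ldots,N\}$: choosing $N$ with $\frac{g(N)}{h(N)}\ge 2M^2$ and taking $s=N-P=\lceil Mh(N)\rceil$ makes $\phi_h(F)\ge\frac{s}{h(N)}\ge M$ automatic, while the block is short enough that $s\le\frac1M g(N)$.

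The main obstacle is this last step, and precisely the upper estimate for $\phi_g$ on such a block. The crux is the clean inequality $\phi_g(\{P+1,\ldots,N\})\le\frac{N-P}{g(N)}$, which I would obtain by verifying that $\frac{n-P}{g(n)}\le\frac{N-P}{g(N)}$ for all $P<n\le N$ (the tail $n>N$ being controlled by monotonicity of $g$). This is exactly where the monotonicity of $\frac{n}{g(n)}$ enters, since
\[
\frac{N-P}{g(N)}-\frac{n-P}{g(n)}=\left(\frac{N}{g(N)}-\frac{n}{g(n)}\right)+P\left(\frac{1}{g(n)}-\frac{1}{g(N)}\right)\ge0,
\]
both summands being nonnegative because $\frac{n}{g(n)}$ and $g$ are nondecreasing. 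Granting this bound, $\phi_g(F)\le\frac{s}{g(N)}\le\frac1M$, which finishes the construction and closes the chain of equivalences.
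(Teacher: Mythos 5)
Your proposal is correct and follows essentially the same route as the paper's own proof: the same two clusters obtained from Theorem~\ref{general-sequences} and Theorem~\ref{general-variations} (with the same verification of condition~\eqref{eq_monotonicity}), the same fusion of (a), (b), (c) via initial-segment test sequences (the paper's witness for (c)$\implies$(a) is $\frac{g(n)}{n}\chi_{\{1,\ldots,n\}}$, which is your $\chi_{\{1,\ldots,n\}}$ up to homogeneity of $\hat\phi$), the same use of Proposition~\ref{general-ideals} for (d), (e), (f), and the same contrapositive block construction for (f)$\implies$(a), including the identical algebraic trick $\frac{m-P}{g(m)}=\frac{m}{g(m)}-\frac{P}{g(m)}\leq\frac{N}{g(N)}-\frac{P}{g(N)}$ resting on monotonicity of $g$ and of $\frac{n}{g(n)}$. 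The only divergence is your block length $\lceil Mh(N)\rceil$ in place of the paper's dyadic choice; to close your asserted estimates $s\le\frac{1}{M}g(N)$ and $P\ge 0$ you still need $g(N)\ge 2M$ and $s\le N$, which follow once one takes $M\ge\max\{1,g(1)\}$ without loss of generality (the conclusion for larger $M$ implies it for smaller) and notes that $h(N)\ge 1$ because $h$ is $\N$-valued.
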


\begin{proof}
(a)$\implies$(b): We claim that $\phi_h(x)\leq\eta\phi_g(x)$ for every $x\in\mathbb{R}^\N$. Indeed, for every $n\in\N$ we have:
\[
\frac{\sum_{i=1}^n |x_i|}{h(n)}\leq\eta\frac{\sum_{i=1}^n |x_i|}{g(n)}.
\]

Items (b), (g), (h) and (i) are equivalent thanks to Theorem \ref{general-sequences}.

The (b)$\implies$(c) is obvious.


Items (c), (j), (k), (l) and (m) are equivalent thanks to Theorem \ref{general-variations}, since for any $g\in\mathcal{G}$ and any $x,y\in\mathbb{R}^\N$ such that $\sum_{i=1}^n |x_i|\leq \sum_{i=1}^n |y_i|$ for all $n\in\N$, it is easy to see that $\phi_g(x)\leq\phi_g(y)$.

By Proposition \ref{general-ideals}, (b)$\implies$(d), (b)$\implies$(e), (d)$\implies$(f) and (e)$\implies$(f).

Therefore, we only need to show (f)$\implies$(a) and (c)$\implies$(a).

(c)$\implies$(a): Assume that (a) does not hold. We need to show that (c) does not hold, i.e., for every $M>0$ there is a finite nonincreasing sequence $x$ such that $\phi_h(x)>M\phi_g(x)$.

Fix $M>0$. Since (a) does not hold, there is $n\in\N$ such that $\frac{g(n)}{h(n)}>M$. Define $x=(x_i)\in\mathbb{R}^\N$ by:
\[
x_i=\begin{cases}
    \frac{g(n)}{n}, & \text{if }i\leq n,\\
    0, & \text{otherwise}.
\end{cases}
\]
Then $x$ is nonincreasing and 
\[
\phi_h(x)\geq\frac{\sum_{i=1}^n |x_i|}{h(n)}=\frac{g(n)}{h(n)}>M.
\]
Now we will show that $\phi_g(x)\leq 1$, which will finish the proof. Fix $m\in\N$. There are three possibilities:
\begin{itemize}
    \item If $m=n$, then $\frac{\sum_{i=1}^n |x_i|}{g(m)}=1$.
    \item If $m>n$, then $\frac{\sum_{i=1}^n |x_i|}{g(m)}=\frac{g(n)}{g(m)}\leq 1$, since $g$ is nondecreasing.
    \item If $m<n$, then 
    \[
    \frac{\sum_{i=1}^n |x_i|}{g(m)}=\frac{\frac{g(n)}{n}}{\frac{g(m)}{m}}\leq 1,
    \] 
    since $\frac{n}{g(n)}$ is nondecresing.
\end{itemize}

(f)$\implies$(a): Assume that (a) does not hold. We will use Proposition \ref{general-ideals}(c) to show that (f) does not hold. 

Fix any $k\in\N$. We are looking for $F\in\Fin$ such that $\phi_g(F)\leq\frac{1}{2^k}$ and $\phi_h(F)\geq 2^k$. Let $\delta>1$ be such that $\frac{1}{g(1)}\geq\frac{1}{\delta}$. Note that $\frac{i}{g(i)}\geq\frac{1}{\delta}$ for all $i\in\N$ (by the fact that $\frac{i}{g(i)}$ is nondecresing).

Since (a) does not hold, there is $n\in\N$ such that $\frac{g(n)}{h(n)}>2^k 2^{k+1}\delta$. Actually, there are infinitely many such $n$ (given one such $n$ we can always find $n'\in\N$ with $\frac{g(n')}{h(n')}>\frac{g(n)}{h(n)}>2^k 2^{k+1}\delta$). Thus, without loss of generality we may assume that $n$ is big enough to guarantee that $\frac{1}{g(n)}<\frac{1}{2^k\delta}<\frac{1}{2^k}$ (since $\lim_n g(n)=\infty$).

Find $j\in\N$ such that $2^{j+1}\delta\geq g(n)>2^j\delta$ and note that $j\geq k$ (as $\frac{1}{g(n)}<\frac{1}{2^k\delta}$). Moreover, since $\frac{n}{g(n)}\geq\frac{1}{\delta}$, we have $n\geq\frac{g(n)}{\delta}>2^j$. Hence, we can find $a\leq n$ such that $n-a=2^{j-k}$. 

Define $F=\{n-a+1,n-a+2,\ldots,n\}$. Then \[
\phi_h(F)\geq \frac{n-a}{h(n)}>\frac{n-a}{g(n)}2^k 2^{k+1}\delta>\frac{2^{j-k}}{2^{j+1}\delta}2^k 2^{k+1}\delta=2^k.
\]
In order to finish the proof, we need to show that 
$$\phi_g(F)=\sup_{m\in\N}\frac{|F\cap\{1,\ldots,m\}|}{g(m)}\leq\frac{1}{2^k}.$$ 
Fix $m\in\N$. There are four possibilities:
\begin{itemize}
    \item If $m=n$, then $\frac{|F\cap\{1,\ldots,m\}|}{g(m)}=\frac{n-a}{g(n)}<\frac{2^{j-k}}{2^j\delta}<\frac{1}{2^k}$.
    \item If $m>n$, then $\frac{|F\cap\{1,\ldots,m\}|}{g(m)}=\frac{n-a}{g(m)}\leq \frac{n-a}{g(n)}<\frac{1}{2^k}$, since $g$ is nondecreasing.
    \item If $m\leq a$, then $\frac{|F\cap\{1,\ldots,m\}|}{g(m)}=0$.
    \item If $a<m<n$, then $\frac{|F\cap\{1,\ldots,m\}|}{g(m)}=\frac{m-a}{g(m)}=\frac{m}{g(m)}-\frac{a}{g(m)}\leq\frac{n}{g(n)}-\frac{a}{g(n)}=\frac{n-a}{g(n)}<\frac{1}{2^k}$, since $\frac{n}{g(n)}$ and $g$ are nondecresing.
\end{itemize}
\end{proof}

\section{Particular case II: summable ideals and \texorpdfstring{$\Lambda BV$}{} spaces}

In this Section we are interested in lsc submeasures of the form $\phi_A(C)=\sum_{n\in C}a_n$, where $A=(a_n)$ is a Waterman sequence (i.e., $A$ is a nonincreasing sequence of positive real numbers such that $\sum_{n\in\N}a_n=\infty$). Note that in this case $\phi$ is actually a measure and $\hat{\phi}_A(x)=\sum_{n\in\N}a_i|x_i|$. In such case:
\begin{itemize}
    \item $\Fin(\phi_A)=\Exh(\phi_A)$ is the summable ideal $\I_A$ given by the sequence $A$ (see Definition \ref{def-summable} and \cite[Example 1.2.3]{Farah});
    \item $\FIN(\phi_A)=\EXH(\phi_A)$ and $\mFIN(\phi_A)=\mEXH(\phi_A)$ (by the previous item, Proposition \ref{Exh=Fin} and Remark \ref{Exh=Fin-rem});
    \item $\BV(\phi_A)$ is the space $\ABV$ of functions of bounded $A$-variation.
\end{itemize} 

We will need the following result.

\begin{theorem}[{\cite[Theorem~4.1]{Chinczycy}}]
Assume we have two Waterman sequences $A=(a_n)_{n\in\N}$ and $B=(b_n)_{n\in\N}$. The following statements are equivalent:
\begin{enumerate}
\item[(a)] $\I_A\leq_K\I_B$;
\item[(b)] There exist positive numbers $m,M$ and a partition of $\N$ into consecutive nonempty  finite intervals $(I_n)_{n\in\N}$ such that 
$$ ma_n\leq\sum_{i\in I_n}b_i \leq Ma_n$$ 
for every $n\in\N$;
\item[(c)] There exists $M>0$ such that 
$$\sum_{i=1}^k b_i\geq M\sum_{i=1}^l a_i \Rightarrow b_k\leq Ma_l$$
for all $k,l\in\N$.
\end{enumerate}    
\end{theorem}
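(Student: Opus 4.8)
I would prove the cycle (b)$\Rightarrow$(a)$\Rightarrow$(c)$\Rightarrow$(b), anchoring it on the trivial implication (b)$\Rightarrow$(a). Given a partition $(I_n)$ and constants $m,M$ as in (b), define $f\colon\N\to\N$ by letting $f(i)=n$ for the unique $n$ with $i\in I_n$ (well defined since the $I_n$ partition $\N$). For $C\in\I_A$ we have $f^{-1}[C]=\bigcup_{n\in C}I_n$, so $\sum_{i\in f^{-1}[C]}b_i=\sum_{n\in C}\sum_{i\in I_n}b_i\leq M\sum_{n\in C}a_n<\infty$, whence $f^{-1}[C]\in\I_B$ and $\I_A\leq_K\I_B$.

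\textbf{(a)$\Rightarrow$(c).} Let $f$ witness $\I_A\leq_K\I_B$ and set $w_n=\sum_{i\in f^{-1}(n)}b_i$. Then $\sum_n w_n=\sum_i b_i=\infty$, and the reduction property says precisely that $\sum_{n\in C}a_n<\infty$ implies $\sum_{n\in C}w_n<\infty$; in other words the summable-type ideal determined by $(w_n)$ contains $\I_A$. I would then prove (c) by contraposition. If (c) fails, then for each $j$ there are $k_j,l_j$ with $\sum_{i\leq k_j}b_i\geq 2^j\sum_{i\leq l_j}a_i$ and $b_{k_j}>2^j a_{l_j}$; since $(b_n)$ is bounded this forces $a_{l_j}\to 0$ and $l_j\to\infty$. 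Putting $C_j=f[\{1,\dots,k_j\}]$ we get $\sum_{n\in C_j}w_n\geq\sum_{i\leq k_j}b_i\geq 2^j\sum_{i\leq l_j}a_i$, while (using that $b$ is nonincreasing, so every term below $k_j$ exceeds $2^j a_{l_j}$) each $n\in C_j$ carries a relatively large weight $w_n$. A selection argument inside the $C_j$, retaining indices of large ratio $w_n/a_n$ and assembling the chosen blocks along a sufficiently sparse subsequence of $j$'s so that they are pairwise disjoint, produces a single $C$ with $\sum_{n\in C}a_n<\infty$ and $\sum_{n\in C}w_n=\infty$, contradicting $\I_A\subseteq\I_{(w_n)}$. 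Extracting and correctly scaling this single set $C$ is the delicate point of this direction.

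\textbf{(c)$\Rightarrow$(b).} Fix the constant $M$ from (c) (we may assume $M\geq 1$), put $p_0=0$ and $p_n=\min\{k:\sum_{i\leq k}b_i\geq M\sum_{i\leq n}a_i\}$ (which exists since $\sum_i b_i=\infty$), and set $I_n=\{p_{n-1}+1,\dots,p_n\}$. First one rules out empty blocks: if $p_n=p_{n-1}$ then $\sum_{i\leq p_{n-1}}b_i\geq M\sum_{i\leq n}a_i$, so (c) at $(k,l)=(p_{n-1},n)$ gives $b_{p_{n-1}}\leq M a_n$, and together with the minimality bound $\sum_{i\leq p_{n-1}-1}b_i<M\sum_{i\leq n-1}a_i$ this yields $\sum_{i\leq p_{n-1}}b_i<M\sum_{i\leq n}a_i$, a contradiction. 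The upper bound is then immediate: minimality of $p_n$ gives $\sum_{i\in I_n}b_i<M a_n+b_{p_n}$, and (c) at $(p_n,n)$ gives $b_{p_n}\leq M a_n$, so $\sum_{i\in I_n}b_i\leq 2M a_n$.

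\textbf{Main obstacle.} The matching lower bound $\sum_{i\in I_n}b_i\geq m a_n$ is the crux. Writing the block mass as $M a_n+O_n-O_{n-1}$ with overshoots $O_n=\sum_{i\leq p_n}b_i-M\sum_{i\leq n}a_i\in[0,M a_n)$, positivity is clear but a uniform lower bound is not, since $O_{n-1}$ could be close to $M a_n$. The essential point is that $M$ is the \emph{tight} constant from (c): one shows the leading term of each block obeys $b_{p_{n-1}+1}\leq M a_n$ (again by (c), because $b_{p_{n-1}+1}>M a_n$ would force $\sum_{i\leq p_{n-1}+1}b_i\geq M\sum_{i\leq n}a_i$ and then (c) would contradict it), and exploits the monotonicity of $(b_n)$ to prevent a near-maximal overshoot from being immediately followed by a vanishing block. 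The degenerate case $a_n\not\to 0$ (equivalently $\I_A=\Fin$), where both sequences are bounded below, must be handled separately by an elementary grouping. I expect this overshoot analysis, tied to the exact constant in (c), together with the bad-set extraction of (a)$\Rightarrow$(c), to be the hardest parts of the argument.
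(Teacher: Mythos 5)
First, a structural remark: the paper does not prove this statement at all --- it is quoted from \cite[Theorem~4.1]{Chinczycy} and used as a black box in Section 6 --- so there is no in-paper proof to compare your argument against, and I assess it on its own merits. Your (b)$\Rightarrow$(a) is complete and correct, and in (c)$\Rightarrow$(b) your greedy construction does yield nonemptiness of the blocks and the upper bound $\sum_{i\in I_n}b_i\leq 2Ma_n$ exactly as you say. But both places you flag as ``delicate'' are genuine gaps, and in one of them the mechanism you propose is provably insufficient. In (a)$\Rightarrow$(c) the ``selection argument'' is not yet an argument: the sets $C_j=f[\{1,\dots,k_j\}]$ all contain $f(1)$, so no sparse subsequence makes them disjoint, and on the part of $C_j$ below $l_j$ the mass $\sum w_n$ may sit on finitely many indices $n$ with $a_n$ large, where ``large ratio $w_n/a_n$'' does not convert into ``small $\sum a_n$''. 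The direction can be completed along your lines (split $C_j$ at $l_j$; keep only indices with $w_n\geq 2^{j-2}a_n$; for each scale $t$ delete the finite set $\{n: a_n>2^{-t}\}$, whose total $w$-mass is a fixed finite constant; trim to a minimal subset of $w$-mass at least $1$; and finish with an argument that overlapping such sets still force $\sum_{n\in C}w_n=\infty$), but none of this is in the proposal.

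The serious defect is the lower bound in (c)$\Rightarrow$(b): with the \emph{same} constant $M$ as in (c), your partition can fail it, and monotonicity of $(b_n)$ does not prevent a near-maximal overshoot from being followed by a vanishing block. Counterexample sketch: work in stages, with $a_n=\epsilon_j$ on a long stage $j$ and $b$-terms equal to $(1-\delta_j)\epsilon_j$ at the start of that stage, where $\delta_j\to 0$. After the first block of the stage that consumes two $b$-terms, the overshoot lies in $[(1-2\delta_j)\epsilon_j,(1-\delta_j)\epsilon_j)$; now insert a single $b$-term of exactly the mass $\tau_j\in[\delta_j\epsilon_j,2\delta_j\epsilon_j]$ needed to reach the next threshold, and let all later stage-$j$ terms and the next stage's common value $\epsilon_{j+1}$ equal $\tau_j$. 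Both sequences are Waterman sequences, and (c) holds with $M=1$: within a stage every $b_k\leq(1-\delta_j)\epsilon_j<\epsilon_j\leq a_l$, and across stages the hypothesis $\sum_{i\leq k}b_i\geq\sum_{i\leq l}a_i$ is never met because all overshoots stay below $\epsilon_{j+1}$. Yet the greedy-$M$ partition has blocks of mass $\tau_j\leq 2\delta_j a_n$, so no uniform $m>0$ exists. The repair is not a finer overshoot analysis but a change of constant: run your construction with thresholds $2M\sum_{i\leq n}a_i$. Then any $k$ with $\sum_{i\leq k}b_i\geq 2M\sum_{i\leq n-1}a_i$ satisfies $\sum_{i\leq k}b_i\geq M\sum_{i\leq n}a_i$ (because $\sum_{i\leq n-1}a_i\geq a_1\geq a_n$), so (c) applied at $l=n$ gives $b_k\leq Ma_n$; hence every incoming overshoot is below $Ma_n$, blocks are nonempty, and each block mass lies strictly between $Ma_n$ and $3Ma_n$, which proves (b). As written, however, your proof of the implication you yourself identify as the crux does not go through.
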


\begin{theorem}\label{th:inclusions}
Assume we have two  Waterman sequences $A=(a_n)_{n\in\N}$ and $B=(b_n)_{n\in\N}$. The following statements are equivalent:
\begin{enumerate}
\item[(a)] $\sum_{i=1}^n b_i = O(\sum_{i=1}^n a_i)$, i.e., there is $\eta>0$ such that 
\[
\frac{\sum_{i=1}^n b_i}{\sum_{i=1}^n a_i}\leq\eta
\]
for all $n\in N$;
\item[(b)] $\I_A\leq_K\I_B$;
\item[(c)] $\phi_B\preceq_m\phi_A$;
\item[(d)] $\ABV\subseteq\BBV$;
\item[(e)] $\mEXH(\phi_A)\subseteq\mEXH(\phi_B)$.
\item[(f)] $\mFIN(\phi_A)\subseteq\mFIN(\phi_B)$;
\item[(g)] $\mEXH(\phi_A)\subseteq\mFIN(\phi_B)$;
\end{enumerate}
\end{theorem}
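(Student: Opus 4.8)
The plan is to reduce the bulk of the listed equivalences to Theorem~\ref{general-variations} and to link the Kat\v{e}tov condition (b) to the rest through the partial-sum condition (a), using the quoted characterization from \cite{Chinczycy}.

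First I would put the pair $\phi_A,\phi_B$ under Theorem~\ref{general-variations}. These are (non-pathological) measures with $\phi_A(\{i\})=a_i>0$ and $\phi_B(\{i\})=b_i>0$, so the hypotheses are met. To bring item~(d) into the picture I would verify the monotonicity condition~\eqref{eq_monotonicity}: since $\hat\phi_A(x)=\sum_n a_n|x_n|$ and $(a_n)$ is nonincreasing, Abel summation against the nonnegative increments $a_n-a_{n+1}$ shows that $\sum_{i=1}^n|x_i|\le\sum_{i=1}^n|y_i|$ for all $n$ forces $\hat\phi_A(x)\le\hat\phi_A(y)$, and the same holds for $\phi_B$. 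Recalling that $\BV(\phi_A)=\ABV$ and $\BV(\phi_B)=\BBV$, Theorem~\ref{general-variations} then yields in one stroke the equivalence of (c), (d), (e), (f) and (g).

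Next I would bridge (a) into this block via (c). For (a)$\Rightarrow$(c), another Abel summation does it: for a nonincreasing nonnegative finite $x$, summing by parts turns $\sum_{i=1}^n b_i\le\eta\sum_{i=1}^n a_i$ into $\hat\phi_B(x)=\sum_n b_nx_n\le\eta\sum_n a_nx_n=\eta\hat\phi_A(x)$, which is $\phi_B\preceq_m\phi_A$. Conversely, testing $\phi_B\preceq_m\phi_A$ on the nonincreasing sequences $x=\chi_{\{1,\dots,n\}}$ returns $\sum_{i=1}^n b_i\le M\sum_{i=1}^n a_i$, i.e.\ (a). After this step, (a), (c), (d), (e), (f) and (g) are all equivalent.

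It remains to tie in (b) through the quoted theorem of \cite{Chinczycy}, and this is the step I expect to be the crux. The easy half is (b)$\Rightarrow$(a): from the partition characterization one gets consecutive blocks $(I_n)$ with $\sum_{i\in I_n}b_i\le Ma_n$; any $N$ lies in some block $I_p$, and since the blocks are nonempty and consecutive we have $p\le N$, whence $\sum_{i=1}^N b_i\le\sum_{n\le p}\sum_{i\in I_n}b_i\le M\sum_{n\le p}a_n\le M\sum_{i=1}^N a_i$, which is (a). The delicate half is (a)$\Rightarrow$(b). Here the tempting move of building the partition greedily can fail, because (a) does \emph{not} force $b_n=O(a_n)$ pointwise, so one may be unable to keep all block sums comparable to $a_n$. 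I would therefore bypass the partition and instead verify the implication characterization of \cite{Chinczycy}. Assuming (a) with constant $\eta$, set $M=2\eta$ and suppose $\sum_{i=1}^k b_i\ge M\sum_{i=1}^l a_i$. Since (a) gives $\sum_{i=1}^k b_i\le\eta\sum_{i=1}^k a_i$, the hypothesis forces $\sum_{i=1}^l a_i\le\frac{\eta}{M}\sum_{i=1}^k a_i$, so $l<k$ (the cases $l\ge k$ contradict (a) and are vacuous). For $l<k$, the monotonicity of $(a_n)$ and $(b_n)$ gives
\[
\frac{b_k}{a_l}\le\frac{\sum_{i=l+1}^k b_i}{\sum_{i=l+1}^k a_i}=\frac{\sum_{i=1}^k b_i-\sum_{i=1}^l b_i}{\sum_{i=1}^k a_i-\sum_{i=1}^l a_i}\le\frac{\eta\sum_{i=1}^k a_i}{\sum_{i=1}^k a_i-\frac{\eta}{M}\sum_{i=1}^k a_i}=\frac{\eta M}{M-\eta}=M,
\]
so $b_k\le Ma_l$, which is exactly the condition yielding $\I_A\leq_K\I_B$. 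The whole difficulty is thus concentrated in choosing $M=2\eta$ so that the partial-sum hypothesis confines $l$ to a range where the nonincreasing structure upgrades (a) into the pointwise bound $b_k\le Ma_l$.
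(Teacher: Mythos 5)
Your proof is correct, and its skeleton largely matches the paper's: both reduce the equivalence of (c)--(g) to Theorem~\ref{general-variations} after verifying condition~\eqref{eq_monotonicity} by Abel summation against the increments $a_k-a_{k+1}$, and both link (b) to (a) through condition (c) of the quoted theorem from \cite{Chinczycy} with the same constant $M=2\eta$; you argue (a)$\implies$(b) directly while the paper argues by contraposition, but the algebra is identical in substance (in both cases monotonicity of the two sequences upgrades the partial-sum hypothesis to the pointwise bound $b_k\leq Ma_l$, and your forced inequality $l<k$ corresponds to the paper's two-case split $k\leq l$ versus $k>l$). The one genuine difference is how (a) enters the chain: the paper quotes \cite[Theorem 3]{PerlmanWaterman} for (a)$\iff$(d) and lets Theorem~\ref{general-variations} carry (d) to the rest, whereas you prove (a)$\iff$(c) directly --- Abel summation for (a)$\implies$(c), and testing $\preceq_m$ on the nonincreasing sequences $\chi_{\{1,\dots,n\}}$ for (c)$\implies$(a). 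Your route is more self-contained: it removes the dependence on the Perlman--Waterman result and in effect reproves it (since (c)$\iff$(d) is supplied by Theorem~\ref{general-variations}), at the cost of only a few extra lines. Your caution about the greedy-partition approach to (a)$\implies$(b) is also well placed, and the paper shares it: neither proof attempts to construct the partition of characterization (b) of \cite{Chinczycy}; both work with the implication-form characterization instead.
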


\begin{proof}
The equivalence of items (c)-(g) follows from Theorem \ref{general-variations}, since given any Waterman sequence $A=(a_n)_{n\in\N}$ and any $x,y\in\mathbb{R}^\N$ such that $\sum_{i=1}^n |x_i|\leq \sum_{i=1}^n |y_i|$ for all $n\in\N$, using the fact that $(a_n)$ is nonincreasing we have:
\[
\sum_{k=1}^n a_k|x_k|=\sum_{k=1}^{n-1}\left(\sum_{i=1}^k |x_i|\right)(a_k-a_{k+1})+a_n\sum_{i=1}^n|x_i|\leq 
\]
\[
\leq\sum_{k=1}^{n-1}\left(\sum_{i=1}^k |y_i|\right)(a_k-a_{k+1})+a_n\sum_{i=1}^n|y_i|=\sum_{k=1}^n a_k|y_k|.
\]
Hence, $\phi_A(x)\leq\phi_A(y)$.

The equivalence of items (a) and (d) is proved in \cite[Theorem 3]{PerlmanWaterman}, so it remains to show that (a) and (b) are equivalent.

$(b)\Rightarrow (a)$: By the previous theorem, we can find $M>0$ and a partition of $\N$ into consecutive nonempty intervals $(I_n)_{n\in\N}$ such that for all $n\in\N$ we have
$$\frac{\sum_{i\in I_n}b_i}{a_n}\leq M ,$$ thus 
$$\frac{\sum_{i=1}^{\max I_n} b_i}{\sum_{i=1}^n a_i}\leq M. $$

Let us put $\eta=M$ and take $n\in\N$. Then there exists $j\in\N$ such that $n\in  I_j$. It follows that $n\geq j$, thus
$$\frac{\sum_{i=1}^n b_i}{\sum_{i=1}^n a_i}\leq \frac{\sum_{i=1}^{\max I_j} b_i}{\sum_{i=1}^j a_i}\leq M=\eta. $$

$(a)\Rightarrow (b)$: Suppose that $\I_A\not\leq_K\I_B$ and fix $\eta>0$. Let $M=2\eta$. By the previous theorem, we can find $k,l\in\N$ such that $\sum_{i=1}^k b_i\geq M\sum_{i=1}^l a_i$  and  $b_k> M a_l$. We have two cases. 

If $k\leq l$ then 
$$\frac{\sum_{i=1}^k b_i}{\sum_{i=1}^k a_i}\geq \frac{\sum_{i=1}^k b_i}{\sum_{i=1}^l a_i}\geq  M> \eta. $$

If $k>l$ then
$$\sum_{i=l+1}^k a_i\leq \sum_{i=l+1}^k a_l< \sum_{i=l+1}^k \frac{b_k}{M}\leq \sum_{i=l+1}^k \frac{b_i}{M}\leq \sum_{i=1}^k\frac{b_i}{M},    $$
thus 
$$\sum_{i=1}^k a_i=\sum_{i=1}^l a_i+\sum_{i=l+1}^k a_i<\frac{\sum_{i=1}^k b_i}{M}+\sum_{i=1}^k\frac{b_i}{M}=\frac{2\sum_{i=1}^k{b_i}}{M},   $$
hence 
$$\frac{\sum_{i=1}^k b_i}{\sum_{i=1}^k a_i} >  \frac{M}{2}=\eta. $$

Therefore, in both cases condition (a) does not hold. 
\end{proof}

\section{Comparison of two particular cases}

The next theorem is an interesting comment to the paper \cite{Avdispahic} giving different inclusions between Chanturia classes and Waterman spaces: this is a general proof that many of the inclusions given there are strict (these, for which the  assumption on monotonicity of  $(g(n+1)-g(n))$ is satisfied).

\begin{theorem}
Let $g\in\mathcal{G}$ be such that $(g(n+1)-g(n))$ monotonically tends to $0$. Then $\BV(\phi_g)$ is not equal to any $\ABV$.
\end{theorem}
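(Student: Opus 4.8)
The plan is to argue by contradiction. Suppose $\BV(\phi_g)=\ABV=\BV(\phi_A)$ for some Waterman sequence $A=(a_n)$. Both $\phi_g$ and $\phi_A$ satisfy the monotonicity condition \eqref{eq_monotonicity} (this was checked for such submeasures in Sections 5 and 6), so Theorem \ref{general-variations} converts the two inclusions $\BV(\phi_g)\subseteq\BV(\phi_A)$ and $\BV(\phi_A)\subseteq\BV(\phi_g)$ into the two estimates $\phi_A\preceq_m\phi_g$ and $\phi_g\preceq_m\phi_A$. Concretely, there is $M>0$ with $\tfrac1M\hat{\phi}_g(x)\le\hat{\phi}_A(x)\le M\hat{\phi}_g(x)$ for every nonincreasing finite sequence $x$. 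My aim is to exhibit one nonincreasing test sequence on which these two functionals are incomparable.

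Set $\delta_n=g(n)-g(n-1)$ with $g(0)=0$; by hypothesis $(\delta_n)$ is (eventually) nonincreasing, so after discarding finitely many initial terms $\delta=(\delta_n)$ is an admissible test sequence. First, testing $\phi_g\preceq_m\phi_A$ on $\chi_{\{1,\dots,n\}}$ and using $\hat{\phi}_g(\chi_{\{1,\dots,n\}})=\tfrac{n}{g(n)}$ gives the lower bound $S_n:=\sum_{i=1}^n a_i\ge\tfrac1M\tfrac{n}{g(n)}$. Second, testing $\phi_A\preceq_m\phi_g$ on truncations of $\delta$ gives, for all $N$, \[ \sum_{i=1}^N a_i\delta_i=\hat{\phi}_A(\delta\chi_{\{1,\dots,N\}})\le M\,\hat{\phi}_g(\delta)\le M, \] because $\hat{\phi}_g(\delta)=\sup_n\tfrac{\sum_{i\le n}\delta_i}{g(n)}=\sup_n\tfrac{g(n)}{g(n)}\le 1$. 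Thus it suffices to prove $\sum_n a_n\delta_n=\infty$, which contradicts the previous display.

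To obtain $\sum_n a_n\delta_n=\infty$, put $T_n=n/g(n)$ and use summation by parts: since $a_n=S_n-S_{n-1}$ and $(\delta_n)$ is nonincreasing, \[ \sum_{n=1}^N a_n\delta_n=S_N\delta_N+\sum_{n=1}^{N-1}S_n(\delta_n-\delta_{n+1})\ge\frac1M\Bigl(T_N\delta_N+\sum_{n=1}^{N-1}T_n(\delta_n-\delta_{n+1})\Bigr)=\frac1M\sum_{n=1}^N(T_n-T_{n-1})\delta_n, \] where I inserted $S_n\ge\tfrac1M T_n$ and then reversed the Abel transform (with $T_0=0$). Everything therefore reduces to the purely density-theoretic claim $\sum_n(T_n-T_{n-1})\delta_n=\infty$, a statement about $g$ alone. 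For this I would exploit the exact identity $g(n-1)(T_n-T_{n-1})+T_n\delta_n=T_ng(n)-T_{n-1}g(n-1)=n-(n-1)=1$, valid because $T_ng(n)=n$. Writing $\alpha_n=T_n\delta_n\in[0,1]$ this yields $(T_n-T_{n-1})\delta_n=\tfrac{\delta_n}{g(n-1)}(1-\alpha_n)$, hence $\sum_n(T_n-T_{n-1})\delta_n\asymp\sum_n\tfrac{\delta_n}{g(n)}(1-\alpha_n)$ (using $g(n-1)/g(n)\to1$). Since $\sum_n\tfrac{\delta_n}{g(n)}=\infty$ (for large $n$, $\tfrac{\delta_n}{g(n)}\ge\tfrac12(\log g(n)-\log g(n-1))$, whose partial sums tend to $\infty$), convergence of the weighted sum would force $\alpha_n\to1$; here $1-\alpha_n=\tfrac1{g(n)}\sum_{k\le n}(\delta_k-\delta_n)$ is the normalised gap between the last increment $\delta_n$ and the average increment $g(n)/n$. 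I would finish by dyadic blocking in the values of $g$: picking $n_j$ with $g(n_j)\asymp 2^j$, divergence of $\sum_n\tfrac{\delta_n}{g(n)}(1-\alpha_n)$ reduces to $\sum_j(1-\alpha_{n_j})=\infty$, and the monotonicity of $(\delta_n)$ is what keeps $\alpha_n=n\delta_n/g(n)$ from tending to $1$ at a summable rate along this subsequence.

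The hard part is precisely this last divergence in the \emph{near-linear} regime, where $g(n)$ is close to $n$ (for instance $g(n)=n/\log n$): there $\alpha_n\to1$, both $\sum_n\tfrac{\delta_n}{g(n)}$ and its $\alpha_n$-weighted companion diverge, and one must extract the genuinely positive remainder $\sum_n\tfrac{\delta_n}{g(n)}(1-\alpha_n)$ without cancellation. This is the single step where the hypothesis that $(g(n+1)-g(n))$ is \emph{monotone}, and not merely convergent to $0$, is indispensable: dropping monotonicity one could interlace long ``linear'' blocks (where $\alpha_n\approx1$) with long ``flat'' blocks (where $\alpha_n\approx0$) so that the cross term $\sum_n\tfrac{\delta_n}{g(n)}\alpha_n(1-\alpha_n)$ stays summable while both marginal sums diverge, and the contradiction would evaporate.
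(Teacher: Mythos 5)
Your reduction is correct as far as it goes, and in its second half it is genuinely different from the paper's argument. Assuming $\BV(\phi_g)=\ABV=\BV(\phi_A)$, Theorem \ref{general-variations} (whose extra hypothesis \eqref{eq_monotonicity} does hold for $\phi_g$ and $\phi_A$, as verified in Sections 5 and 6) yields both $\phi_A\preceq_m\phi_g$ and $\phi_g\preceq_m\phi_A$; testing the latter on indicators gives $S_n\geq\frac{1}{M}\,\frac{n}{g(n)}$, testing the former on the increment sequence $\delta_n=g(n)-g(n-1)$ gives $\sum_n a_n\delta_n\leq M$, and your Abel-summation step correctly shows $\sum_{n\leq N} a_n\delta_n\geq\frac{1}{M}\sum_{n\leq N}(T_n-T_{n-1})\delta_n$. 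So your whole proof hinges on the purely $g$-theoretic claim that $\sum_n(T_n-T_{n-1})\delta_n=\infty$, equivalently $\sum_n\frac{\delta_n}{g(n)}(1-\alpha_n)=\infty$ with $\alpha_n=n\delta_n/g(n)$.

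That claim, however, is never proved, and it is the entire mathematical content of the theorem in your approach as well as the only place where the hypothesis on monotone increments enters. You explicitly defer it (``I would finish by dyadic blocking\dots'', ``the hard part is precisely this last divergence''), and the sketch offered is not a proof: the reduction to $\sum_j(1-\alpha_{n_j})=\infty$ along a subsequence with $g(n_j)\asymp 2^j$ is unjustified in both directions, since $(1-\alpha_n)$ is not monotone and can oscillate within a dyadic block, and no argument is given for why monotonicity of $(\delta_n)$ prevents $\alpha_n\to 1$ at a summable rate. The claim is in fact true, but proving it requires a genuine argument; for instance, an upcrossing argument works: monotonicity gives, for upward steps, $\alpha_{n+1}-\alpha_n\leq\alpha_n(1-\alpha_n)/n$, so each passage of $(\alpha_n)$ from below $\tfrac14$ to above $\tfrac12$ contributes at least $\tfrac14$ to $\sum_n\alpha_n(1-\alpha_n)/n$, while convergence of that sum together with the two divergences $\sum_n\alpha_n/n=\sum_n\delta_n/g(n)=\infty$ and $\sum_n(1-\alpha_n)/n=\infty$ (the latter coming from $T_n\to\infty$) forces infinitely many disjoint such passages --- a contradiction. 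This missing lemma is comparable in length and delicacy to what the paper does instead: the paper uses the same test sequence $\delta$, but in the case $\sum_n a_n\delta_n<\infty$ it makes no attempt to prove divergence; it recursively constructs a second witness $y$ (equal to $(i+1)\delta_n$ on suitably chosen blocks) lying in $\mFIN(\phi_A)\setminus\mFIN(\phi_g)$, thereby refuting the reverse inclusion. As submitted, your proposal has a genuine gap at its central step.
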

\begin{proof}
Let $A=(a_n)$ be a Waterman sequence. We will obtain the thesis by showing that $\mFIN(\phi_g)\not=\mFIN(\phi_A)$ and applying Theorem~\ref{general-variations}.

Define the sequence $x=(x_n)$ by the formula $\sum_{i=1}^{n} x_n= g(n)$. Clearly, $x$ is nonincreasing, tends to $0$ and belongs to $\mFIN(\phi_g)$ as $\hat{\phi}_g(x)=1$. We have two cases:

In the first case we assume that $\sum_{n=1}^{\infty}a_n x_n=\infty$. Then $x\in \mFIN(\phi_g)\setminus\mFIN(\phi_A)$.

In the second case we assume that $\sum_{n=1}^{\infty}a_n x_n<\infty$. Notice that then for every $k\in\N$ we have  $\sum_{n=1}^{\infty}a_n(k\cdot x_n)=k\cdot \sum_{n=1}^{\infty}a_n x_n<\infty$. We will show that $\mFIN(\phi_A)\setminus\mFIN(\phi_g)\not=\emptyset$.

We will now define recursively sequences $(n_i)$ and $(m_i)$ of natural numbers. We put as $n_1$ the smallest $N$ such that $2\cdot \sum_{n>N} a_n x_n<\frac{1}{4}$. Next, we put as $m_1$ the smallest $N> n_1$ such that $2 x_N\leq x_{n_1}$. We can find such because $\lim_{n\to\infty} x_n=0$. 

Now, suppose we have already defined $n_i$ and $m_i$ for some $i\in\N$. 
Then we define $n_{i+1}$ as the smallest $N>m_i$ such that 
$$(i+2) \sum_{n>N} a_n x_n<\frac{1}{2^{i+2}} \text{ and }(i+1)\sum_{n=m_i}^N x_n\geq \frac{i+1}{2}g(N).$$ 
We can find such $N$, since  $\lim_{j\to\infty} \sum_{n>j} a_n x_n=0$ and  $(i+1)\sum_{n=1}^j x_n=(i+1) g(j)$, which tends to infinity, thus 
$$\lim_{j\to\infty} \frac{(i+1) \sum_{n=m_i}^j x_n}{g(j)}=i+1.$$
Next, we define $m_{i+1}$ as the smallest $N>n_{i+1}$ such that $(i+2)x_N\leq (i+1) x_{n_{i+1}}$.

We can now proceed to defining the sequence $y=(y_n)$ such that $y\in \mFIN(\phi_A)\setminus\mFIN(\phi_g)$, which will end the proof. We put

$$y_n=\begin{cases}
    x_n, & \text{if }n\leq n_1,\\
    (i+1) x_{m_i}, & \text{if }n_i<n<m_i \text{ for some }i,\\
    (i+1)x_n, & \text{if }m_i\leq n\leq n_{i+1} \text{ for some }i.
\end{cases}
$$
First, notice that $y$ is nonincreasing as $(x_n)$ is nonincreasing and  $(i+1)x_{m_i}\leq i\cdot x_{n_i}$ for every $i\in\N$. Next, observe that $y\not\in\mFIN(\phi_g)$ as 
$$\hat\phi_g(y)\geq \frac{\sum_{n=m_i}^{n_{i+1}} (i+1)x_n}{g(n_{i+1})}\geq \frac{i+1}{2} $$
for every $i\in\N$. 
Finally, we obtain $y\in\mFIN(\phi_A)$ by the fact that
$$\sum_{n=1}^{\infty} a_n y_n \leq\sum_{n\leq n_1}a_n x_n + \sum_{i=1}^{\infty} \sum_{n=n_i+1}^{n_{i+1}} a_n(i+1)x_n\leq $$
$$\leq \sum_{n\leq n_1}a_n x_n + \sum_{i=1}^{\infty} \sum_{n>n_i}(i+1)a_n x_n <  \sum_{n\leq n_1}a_n x_n + \sum_{i=1}^{\infty} \frac{1}{2^{i+1}} =$$
$$=\sum_{n\leq n_1}a_n x_n +\frac{1}{2}<\infty.$$
\end{proof}

\begin{example}
The sequence $g(n)=\sqrt{n}$ belongs to $\mathcal{G}$ and is such that $g(n+1)-g(n)$ monotonically tends to $0$. Therefore,  $\BV(\phi_g)$ is not equal to any $\ABV$.
\end{example}

\bibliographystyle{amsplain}
\bibliography{references.bib}

\end{document}